\newtheorem{example}{Example}
\newtheorem{theorem}{Theorem}
\newtheorem{definition}{Definition}
\newtheorem{lemma}{Lemma}
\def\blfootnote{\xdef\@thefnmark{}\@footnotetext}
\def\ps@pprintTitle{%
  \let\@oddhead\@empty
  \let\@evenhead\@empty
  \let\@oddfoot\@empty
  \let\@evenfoot\@oddfoot
}
\title{Branch-and-cut algorithms for colorful components problems\blfootnote{\textit{Email addresses:} \texttt{claudia.archetti@unibs.it} (Claudia Archetti), \texttt{mcerulli@unisa.it} (Martina Cerulli),
\texttt{csorgente@unisa.it} (Carmine Sorgente)}}
\date{}
\author[1]{Claudia Archetti}
\author[2]{Martina Cerulli}
\author[3]{Carmine Sorgente}
	\affil[1]{Department of Economics and Management, University of Brescia, Brescia (Italy)}
	\affil[2]{Department of Computer Science, University of Salerno, Fisciano (Italy)}
 	\affil[3]{Department of Mathematics, University of Salerno, Fisciano (Italy)}
\begin{document}
\maketitle
\vspace*{-1cm}
\hspace*{-6mm}\fcolorbox{red}{white}{\parbox{\textwidth}{This paper has been accepted for publication in the \textbf{INFORMS Journal on Computing}. The \textbf{final published version} is available at \url{https://doi.org/10.1287/ijoc.2024.0927}, along with its supplemental material including an online appendix and a software data repository (\url{https://github.com/INFORMSJoC/2024.0927}).}}
\vspace*{4mm}

\begin{abstract}
We tackle three optimization problems in which a colored graph, where each node is assigned a color, must be partitioned into colorful connected components. A component is defined as colorful if each color appears at most once. The problems differ in the objective function, which determines which partition is the best one. These problems have applications in community detection, cybersecurity, and bioinformatics. We present integer non-linear formulations, which are then linearized using standard techniques. To solve these formulations, we develop exact branch-and-cut algorithms, embedding various improving techniques, such as valid inequalities, bounds limiting the number of variables, and warm-start and preprocessing techniques. Extensive computational tests on benchmark instances demonstrate the effectiveness of the proposed procedures. The branch-and-cut algorithms can solve reasonably sized instances efficiently. To the best of our knowledge, we are the first to propose an exact algorithm for solving these problems.
\end{abstract}

\section{Introduction}
Graph theory serves as a fundamental framework for modeling complex systems in various domains, including computer science, social networks, cybersecurity, biology, and transportation systems. Within this rich mathematical field, the study of connected components has played an important role in understanding the structural properties and dynamics of graphs. The connected components or simply the \textit{components} of a graph are subgraphs where each node can be reached from every other node in the subgraph via a path. 

An interesting subclass of problems related to connected components is the one in which nodes are colored. 
Specifically, given a node-colored graph~$\mathcal{G}$, with node set $V$ and edge set $E$, any connected component of $\mathcal{G}$ is said to be \textit{colorful} if all its nodes have different colors.
This paper addresses three problems related to these colorful components of a graph: the ``Minimum Orthogonal Partition'' (MOP) problem, also referred to as ``Colorful Components'' in \cite{bruckner2012}, the ``Maximum Edges in transitive Closure'' (MEC) problem, and the ``Minimum Colorful Components'' (MCC) problem. 

The MOP, MEC and MCC problems have been introduced in the context of orthology gene identification in bioinformatics \citep{zheng2011,bruckner2012,popa2014}, where different colors are associated with genes from different genomes linked by pairwise homology relationships, and the so-called \textit{homology graph} has to be converted into a new graph where spurious homologies are removed, with each component satisfying the orthogonality property.
Specifically, given a node-colored graph, all three problems aim at removing edges in such a way that all the connected components of the resulting graph are colorful. However, they differ in terms of objective function: the MOP problem aims at minimizing the number of edges removed; in the MEC problem, the objective is to maximize the transitive closure of the resulting graph; in the MCC the aim is to minimize the number of resulting colorful components.

In addition to applications in bioinformatics, the MOP, MEC and MCC problems arise in various other fields. In social networks, where nodes represent individuals and edges represent connections (friendships, interactions, common interests), all three problems can be used to determine the most influential connections linking distinct (colorful) and cohesive (connected) communities. By guaranteeing the colorfulness of the communities through the removal of specific friendship edges, as in the MOP problem, a social network aims to mitigate the risk of echo chambers, where users predominantly interact with similar individuals. Furthermore, maximizing the number of edges in the transitive closure in the MEC problem means maximizing transitively closed relationships within the network. This promotes the propagation of information through the community, ensuring that a node can be reached by others through a sequence of edges, avoiding the isolation of users within small disconnected groups.
Instead, by minimizing the number of colorful components, the MCC problem helps prevent the dispersion of users into too many groups.
When considering cyberspace networks of computers and devices with various types of connections (e.g., permissions, trust levels, data flow), identifying a subset of edges to be removed while ensuring that the remaining network is composed of colorful components helps optimize the network’s resilience to cyber threats. In the context of blockchains, the Colorful Components problems can be seen as an analogy for sharding \citep{liu2023}. It consists in splitting the network into subnetworks (shards), which manages a portion of the transactions or states, with the goal of improving scalability and security. If the graph represents a blockchain where each node has a specific role (e.g., validators, storage, execution of smart contracts) corresponding to a color, partitioning the nodes into colorful components ensures that the nodes within a shard have distinct roles. This optimizes the functioning of the shards, as each component performs a specific set of functions without overlap, improving load distribution.

\paragraph{Related works} 
The colorful components problems have been studied in comparative genomics \citep{zheng2011}, a branch of bioinformatics dedicated to exploring the structural relationships of genomes across distinct biological species. In this framework, colorful graphs serve as representations of similarities among genes belonging to various homologous gene families: if two nodes (genes) are connected by an edge, those genes have a certain level of similarity or homology; if two nodes share the same color, they belong to the same genome. The concept of ``colorful components'' involves dividing the graph into distinct sections or partitions. Each partition, referred to as a colorful component, corresponds to an orthology set, i.e., a collection of genes that are evolutionarily related, typically stemming from a common ancestor. The partitioning ensures that genes from the same genome are placed into different orthology sets, emphasizing diversity and evolutionary distinctions.

The MOP problem has been introduced in \cite{he2000}. As noted in \cite{bruckner2012}, it can be seen as the problem of destroying, by edge removals, all the paths between two nodes of the same color. In this sense, it is a special case of the \NP-hard Minimum Multi-Cut problem, which, given a set of node pairs (in the MOP case, pairs of nodes having the same color), asks for the minimum number of edges to be removed from the graph to disconnect each given node pair. %Each considered pair is a pair of nodes with the same color. We want to disconnect these pairs
It is also a special case of the Multi-Multiway Cut Problem \citep{avidor2007} which, given some node sets (in the MOP case, sets of nodes having the same color), aims to find the minimum edge set whose removal completely disconnects all node sets.
In \cite{bruckner2012}, it is shown that the MOP problem is polynomial-time solvable for two or fewer colors and \NP-hard otherwise. Fixed-parameter algorithms are also discussed: it is shown that the MOP problem is fixed-parameter-tractable for general colored graphs when parameterized by the number of colors and the number of edge deletions. In \cite{misra2018}, the size of a node cover is considered as the parameter. In \cite{he2000}, an approximation algorithm is proposed for solving the MOP problem on an edge-weighed graph.
Heuristic approaches are proposed in \cite{zheng2011} and \citep{bruckner2012}. In \cite{bruckner2013}, an application for correcting Wikipedia interlanguage links is proposed. These links often have errors due to manual updates or na\"ive bots, and these errors may be found through a graph model \citep{de2010untangling}: each word in a language corresponds to a node, and an interlanguage link corresponds to an edge. The goal is to partition the graph such that each connected component corresponds to a term in multiple languages, ensuring each language appears at most once in each component. In this paper, besides proposing two heuristics for the MOP problem, the authors also solve it as an implicit hitting set problem and a clique partition problem. A hitting set is a set of edges that intersects with every bad path set (a cycle-free path between two nodes of the same color) in a collection of bad path sets. The hitting set problem then aims to find the smallest subset of edges (hitting set) so that removing these edges resolves all violations. \cite{bruckner2013} use the implicit hitting set framework \citep{chandrasekaran2011algorithms,moreno2013}, allowing for dynamically generating constraints (sets) in the MOP problem, i.e., instead of generating all sets upfront, the algorithm starts with a small subset and iteratively adds more sets (constraints) as needed. Instead, the clique partition-based ILP formulation \citep{grotschel1989cutting} transforms the problem into finding a partition of the graph into cliques, ensuring the colorful property is maintained. It has only polynomially many constraints, as opposed to the implicit hitting set formulation which has exponentially many constraints. However, the number of constraints may be too large, therefore, the authors implement a row generation scheme.

Assuming that the orthologous genes trace back to a common ancestor, it is clear that the orthology relation between these genes exhibits transitivity: if gene A is orthologous to gene B, and gene B is orthologous to gene C, then gene A is also orthologous to gene C. This motivates the study of the MEC problem, where transitivity is modeled with transitive closure. 
In \cite{zheng2011}, the MEC problem is conjectured to be \NP-hard. In \cite{popa2014}, it is proved to be \APX-hard when the number of colors in the graph is at least $4$. The authors show the result via a reduction from the MAX-3SAT problem. 
In \cite{popa2015}, the MEC problem is proven to be \APX-hard even in the case when the number of colors is $3$ and \NP-hard to approximate within a factor of $|V|^{(1/3-\epsilon)}$, for any $\epsilon > 0$, when the number of colors is arbitrary, even when the input graph is a tree where each color appears at most twice. A heuristic to solve the MEC problem is presented in \cite{zheng2011},
while \cite{popa2015} present a polynomial-time approximation algorithm. 
In \cite{Dondi20181}, the parameterized and approximation complexity of MCC and MEC problems, for general and restricted instances, is investigated.

The MCC problem is introduced in \cite{popa2014} where the authors prove that it does not admit polynomial-time approximation within a factor of $|V|^{\frac{1}{14}-\epsilon}$, for any $\epsilon > 0$, unless \P =\NP, even if each node color appears at most twice. It is shown by \cite{Dondi20181} that the problem is equivalent to the Minimum Multi-Cut problem on trees \citep{hu1963}. Indeed, when considering a tree, the MCC problem coincides with the MOP problem (since the number of removed edges is equivalent to the number of obtained colorful components), which, as already discussed, can be traced back to the Minimum Multi-Cut problem. Because of this equivalence on trees, the MCC problem is not approximable within factor $1.36 - \epsilon$ for any $\epsilon>0,$ is fixed-parameter tractable, and admits a poly-kernel (when the parameter is the number of colorful components). Moreover, it is shown that the MCC problem is polynomial-time solvable on paths, while it is \NP-hard even for graphs with a distance of 1 to the class of disjoint paths.

The MOP, MEC, and MCC problems belong to the class of \textit{graph modification problems} \citep{Sritharan201632}, which consist in performing a set of modifications to the node and/or edge sets of a graph in order to satisfy specified properties.
Well-known problems in this class aim to produce chordal graphs \citep{Yannakakis1981, Natanzon20011}, planar graphs \citep{Yannakakis1978}, interval graphs \citep{Benzer1959}, cluster graphs \citep{Shamir2004, Ambrosio2025}, as well as to reduce as much as
possible the size of a given combinatorial structure of the graph \citep{zenklusen2010matching, furini2020, Wei2021, cerulli2023mathematical}. 
In colorful components problems, the allowed modifications are edge deletions, while the property that the final graph must satisfy is being a set of colorful components.

\paragraph{Contributions}
To the best of our knowledge, we are the first to formulate the MOP, MEC, and MCC problems as integer nonlinear problems. We linearize the formulations and propose valid inequalities, warm-start, and preprocessing procedures to enhance them. We further provide a formulation to determine the maximum-cardinality colorful component, which is used to derive bounds on the cardinality of the optimal colorful components set. Branch-and-cut algorithms are implemented to solve the formulations, with dynamic separation of the exponentially many connectivity constraints. Computational tests are performed on benchmark and randomly generated instances. 
The results show that the configurations that use the valid inequalities, the preprocessing procedure and warm-start algorithms significantly outperform the plain model by reducing runtime and increasing the number of instances solved to optimality.
\paragraph{Structure of the paper} The paper is organized as follows. In Section~\ref{sec:def} we provide problem definitions and formulations. In Section~\ref{sec:properties} the formulation of the maximum-cardinality colorful component problem is proposed, together with tighter upper bounds on the number of colorful components in any optimal partition of the graph. Different algorithms are presented to compute these upper bounds and find a warm-start solution for the branch-and-cut algorithm. The overall branch-and-cut algorithm is presented in Section~\ref{sec:bec}, where valid inequalities that strengthen the formulations are introduced as well. Section~\ref{sec:results} is devoted to the numerical experiments, and Section~\ref{sec:conclusion} concludes the paper.

\section{Definitions and formulations}\label{sec:def}
In this Section, we first give the formal definition of the problems in Section \ref{sec:def1} and then provide the corresponding mathematical formulations in Section~\ref{sec:form}.

\subsection{Problems definitions}\label{sec:def1}
Before providing a formal definition of the three problems, let us define the \textit{transitive closure} of a graph, which describes the connectedness of its nodes. Specifically, the transitive closure of an undirected graph $\mathcal{G}$ is a graph $\mathcal{H}=(V,E_H)$, where $E_H=\{\{i,j\}: i,j \in V,\ \text{$i$ is connected to $j$ in $\mathcal{G}$}\}$. In other words, it is a \textit{cluster graph} where the nodes of each component form a clique.
Moving to problem definitions, we are given a node-colored graph $\mathcal{G}=(V, E, C)$, where 
$C$ is the set of colors associated with the nodes in $V$, $c_u$ is the color of node $u \in V$, and $V^c\subset V$ the set of nodes $u \in V$ having color $c_u = c$. For a given set of nodes $K \subseteq V$, let $\mathcal{G}[K]$ denote the subgraph of $\mathcal{G}$ induced by $K$. Given a set of nodes $S$, we denote as \textit{colorful} any connected component $\mathcal{G}[S]$ of $\mathcal{G}$ such that all the nodes in $S$ have a different color, i.e., $c_u \neq c_v$ for any pair of nodes $u,v \in S$.
Any partition of $\mathcal{G}$ into colorful components is a feasible solution for all three problems. However, they differ in terms of the objective. The formal definition of each problem is as follows.
\begin{definition}[MOP Problem]
    Given a node-colored graph $\mathcal{G} = (V,E,C)$, the MOP problem consists in finding the smallest subset of edges $E' \subseteq E$ to remove from the graph $\mathcal{G},$ such that in the resulting graph $\mathcal{G}'$, with node set $V$ and edge set $E\setminus E'$, all the connected components are colorful.
\end{definition}
\begin{definition}[MEC Problem]
     Given a node-colored graph $\mathcal{G} = (V,E,C)$, the MEC problem consists in finding the subset of edges $E' \subseteq E$ to remove from the graph $\mathcal{G},$ such that in the resulting graph $\mathcal{G}'$, with node set $V$ and edge set $E\setminus E'$, all the connected components are colorful and the number of edges in its transitive closure is maximized.
\end{definition}
\begin{definition}[MCC Problem]
     Given a node-colored graph $\mathcal{G} = (V,E,C)$, the MCC problem consists in finding the subset of edges $E' \subseteq E$ to remove from the graph $\mathcal{G},$ such that the resulting graph $\mathcal{G}'$, with node set $V$ and edge set $E\setminus E'$, consists of the smallest number of colorful components. 
\end{definition}

Despite sharing the same set of feasible points, the three problems might differ in terms of the optimal solution, as shown in the following examples. 
\begin{figure}[H]
  \caption{\textbf{Comparison between the MOP and MEC problems' objectives.}}
  \label{fig:MOPvsMEC}
   \centering
   \hspace*{\fill}%
   \hfill
   \subfloat[Node-colored graph $\mathcal{G}_1$.]{\includegraphics[scale=0.59]{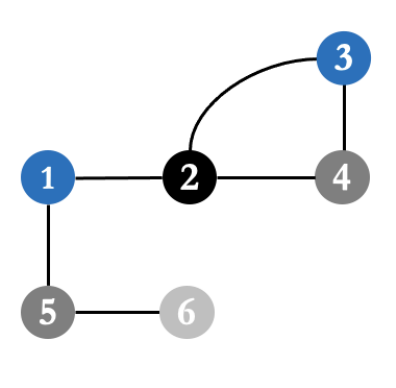}\label{fig:MOPvsMEC-graph}}\hfill   \hfill
   \subfloat[MOP solution for $\mathcal{G}_1$.]{\includegraphics[scale=0.59]{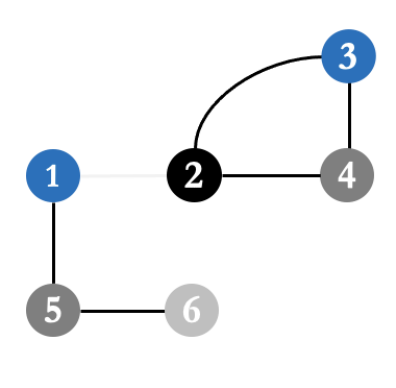}\label{fig:MOPvsMEC-MOPopt}}\hfill   \hfill
  \subfloat[MEC solution for $\mathcal{G}_1$.]{\includegraphics[scale=0.59]{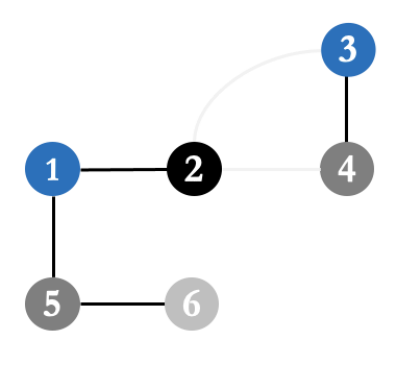}\label{fig:MOPvsMEC-MECopt}}\hfill
  \hspace*{\fill}%
\end{figure}
\begin{example}{}{}
    Consider the graph $\mathcal{G}_1$ depicted in Figure~\ref{fig:MOPvsMEC-graph}, where nodes $1$ and $3$, as well as $4$ and $5$, have the same color.
    An optimal solution to the MOP problem consists in removing edge $\{1,2\}$ only, producing two colorful components, as shown in Figure~\ref{fig:MOPvsMEC-MOPopt}. 
    The transitive closure associated with such a partition contains $6$ edges, while the optimal value of the MEC problem is $7$, as testified by the solution shown in Figure~\ref{fig:MOPvsMEC-MECopt}, obtained by removing two edges, i.e., $\{2,4\}$ and $\{2,3\}$.
\end{example}
\begin{figure}[H]
  \caption{\textbf{Comparison between the MOP and MCC problems' objectives.}}
  \label{fig:MCCvsMOP}
   \centering
   \hspace*{\fill}%
   \hfill
   \subfloat[Node-colored graph $\mathcal{G}_2$.]{\includegraphics[scale=0.59]{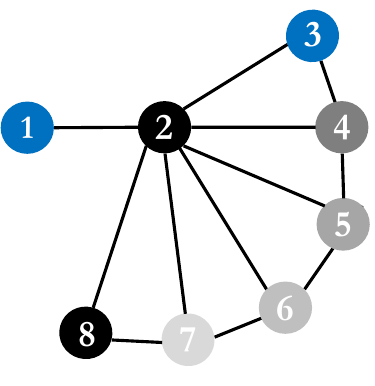}\label{fig:MCCvsMOP-graph}}\hfill\hfill
  \subfloat[MOP solution for $\mathcal{G}_2$.]{\includegraphics[scale=0.59]{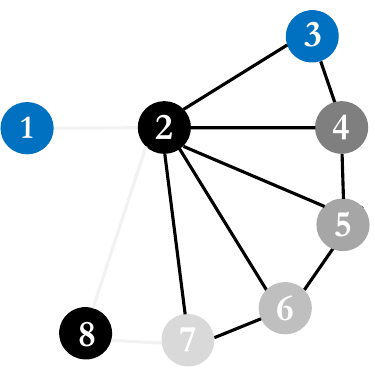}\label{fig:MCCvsMOP-MOPopt}}\hfill\hfill
  \subfloat[MCC solution for $\mathcal{G}_2$.]{\includegraphics[scale=0.59]{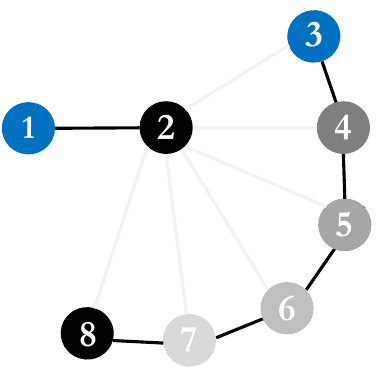}\label{fig:MCCvsMOP-MCCopt}}\hfill
  \hspace*{\fill}%
\end{figure}
\begin{example}
    Consider the graph $\mathcal{G}_2$ depicted in Figure~\ref{fig:MCCvsMOP-graph}, where nodes $1$ and $3$, as well as nodes $2$ and $8$, have the same color. 
    On the one hand, the smallest set of edges whose removal partitions  $\mathcal{G}_2$ into colorful components has size three. The corresponding optimal solution for the MOP problem with such a value is shown in Figure~\ref{fig:MCCvsMOP-MOPopt} and consists of three components. On the other hand, this partition does not lead to the smallest number of components. Indeed, removing the six light-grey colored edges in Figure~\ref{fig:MCCvsMOP-MCCopt} isolates nodes $1$ and $2$ from the remaining ones, producing only two colorful components.
    \end{example}
\begin{figure}[H]
  \caption{\textbf{Comparison between the MEC and MCC problems' objectives.}}
  \label{fig:MCCvsMEC}
   \centering
   \hspace*{\fill}%
   \hfill
   \subfloat[Node-colored graph $\mathcal{G}_3$.]{\includegraphics[scale=0.59]{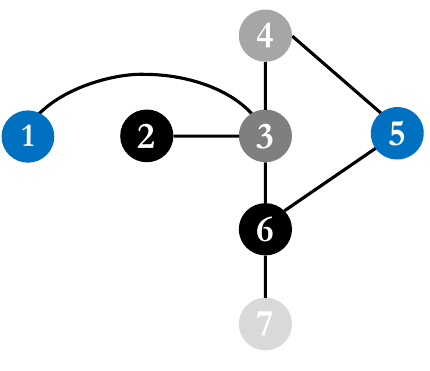}\label{fig:MCCvsMEC-graph}}\hfill\hfill
   \subfloat[MEC solution for $\mathcal{G}_3$.]{\includegraphics[scale=0.59]{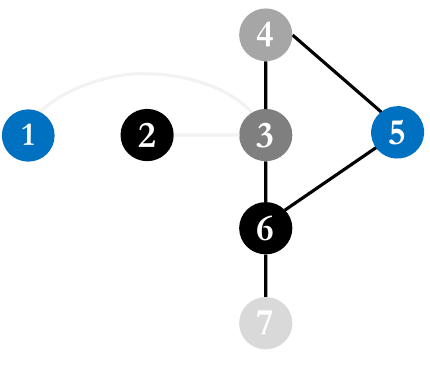}\label{fig:MCCvsMEC-MECopt}}\hfill\hfill
   \subfloat[MCC solution for $\mathcal{G}_3$.]{\includegraphics[scale=0.59]{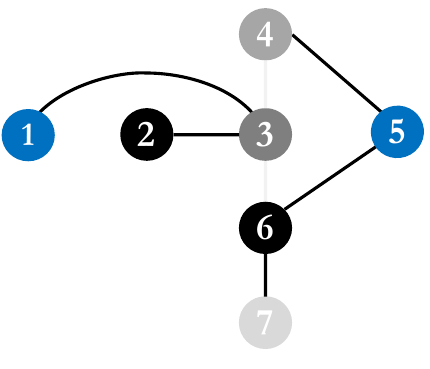}\label{fig:MCCvsMEC-MCCopt}}\hfill
  \hspace*{\fill}%
\end{figure}
\begin{example}
    Consider the graph $\mathcal{G}_3$ depicted in Figure~\ref{fig:MCCvsMEC-graph}, where nodes $1$ and $5$, as well as nodes $2$ and $6$, have the same color. Maximizing the number of edges in the transitive closure of the resulting graph leads to a partition of $\mathcal{G}_3$ into three colorful components, as shown in Figure~\ref{fig:MCCvsMEC-MECopt}. Such a solution is associated with a transitive closure containing $10$ edges. If one aims at minimizing the number of components, instead, the two light-grey colored edges in Figure~\ref{fig:MCCvsMEC-MCCopt} would be removed, obtaining two colorful components of size three, whose transitive closure contains $9$ edges.
    \end{example}

\subsection{Mathematical formulations}\label{sec:form}
Let us denote by $\mathcal{Q}$ the set of colorful components into which the nodes of $\mathcal{G}$ will be partitioned, with cardinality $Q=|\mathcal{Q}|$. A trivial upper bound on $Q$, satisfied by every feasible partition of $\mathcal{G}$ into colorful components, is the number of nodes $|V|$. As we will show in Section~\ref{sec:properties}, tighter bounds on the number of components in any optimal solution can be derived specifically for each problem.

In order to formulate the MOP, MEC, and MCC problems, we define the following binary variables:
\begin{itemize}
    \item  $x_{i}^{k}$, for each $i \in V$ and for each $k \in \mathcal{Q}$, s.t.\ $x_{i}^{k}=1$ iff node $i$ is in the $k$-th component;
    \item $y_{ij}$, for each $\{i,j\} \in E$, s.t.\ $y_{ij}=1$ iff $\{i,j\}$ is not removed from the graph, i.e., $\{i,j\} \notin E'$.
\end{itemize}

A partition of $\mathcal{G}$ into colorful components, resulting from the removal of a subset of edges $E' \subseteq E$ from $\mathcal{G}$, has to satisfy two conditions: (i) for any pair of nodes $i,j \in V$, if $i$ and $j$ are assigned to the same component, they must be \textbf{connected} by means of the edges in $E \setminus E'$, and (ii) in any component, each \textbf{color} in $C$ appears at most once.
These conditions lead to the definition of a feasible set $\mathcal{F}$ for variables ${x}$ and ${y}$ satisfying the following constraints:
{\begin{subequations}
    \label{feasible-set-F}
    \begin{align}
        (\mathcal{F}) \hspace{6ex} & \sum\limits_{k \in \mathcal{Q}} x_{i}^{k} = 1 & \quad \forall\; i \in V\label{assignment}\\
        &\sum\limits_{\substack{i \in V^{c}}} x_{i}^{k} \le 1 &\quad \forall\; c \in C,\, k \in \mathcal{Q}\label{color}\\
        &{|U||V\setminus U|}\cdot\sum\limits_{\{u,v\} \in \delta(U)} y_{uv} \ge \sum\limits_{i \in U}\sum\limits_{j \notin U} \sum\limits_{k \in \mathcal{Q}} x_{i}^{k}x_{j}^{k} &\quad \forall\; U \subset V
        \label{connectivity}\\
        &y_{ij} \le \sum\limits_{k\in \mathcal{Q}} x_{i}^{k}x_{j}^{k} &\quad \forall\; \{i,j\} \in E\label{y_vars}\\
        &x \in \{0,1\}^{|V|\times Q}, y \in \{0,1\}^{|E|}, &
    \end{align}
\end{subequations}}
with $\delta(U)$ denoting the edges with exactly one endpoint in $U$. 

Constraints~\eqref{assignment} force the assignment of each node to exactly one component.
Constraints~\eqref{color} guarantee the occurrence of each color at most once in each component.
Constraints~\eqref{connectivity} link the $y$ and $x$ variables, enforcing connectivity inside each component: for each subset $U \subset V$, if at least one node $i\in U$ belongs to the same component $k$ as a node $j\notin U$
(i.e., $x_{i}^{k}x_{j}^{k} = 1$ with $i \in U$ and $j \in V\setminus U$), the number of non-removed edges in $\delta(U)$ has to be at least one, otherwise $i$ and $j$ would be disconnected. 
Finally, constraints~\eqref{y_vars} force the removal of the edge $\{i,j\}$ if $i$ and $j$ have not been assigned to the same component, as well as, the other way round, the existence of a component containing both $i$ and $j$ if the edge $\{i,j\}$ is not removed from the graph, i.e., if $y_{ij} = 1$ there exists $k$ such that $x_{i}^{k}x_{j}^{k} = 1$.

The nonlinearity in the feasible set~\eqref{feasible-set-F} is given by the bilinear product $x_{i}^{k}x_{j}^{k}$, which appears in the constraints~\eqref{connectivity}--\eqref{y_vars}. To deal with this nonlinearity, we use the Fortet reformulation approach (it corresponds to a specific version of McCormick reformulation \citep{mccormick1976} that deals with products between binary variables). It consists in defining an auxiliary variable $z_{ij}^k$ for each $i,j\in V$ and $k \in \mathcal{Q}$, representing the product $x_{i}^{k}x_{j}^{k}$, and adding to the feasible set~\eqref{feasible-set-F} constraints~\eqref{mccormick}, obtaining a new feasible set $\mathcal{F}^\ell$ defined by the following constraints:
\begin{subequations}
    \label{feasible-set-F_lin}
    \begin{align}
        (\mathcal{F}^\ell) \hspace{6ex} &\; \sum\limits_{k \in \mathcal{Q}} x_{i}^{k} = 1 & \forall\; i \in V\label{assignment_lin}\\
        &\; \sum_{\substack{i \in V^{c}}} x_{i}^{k} \le 1 & \forall\; c \in C, \, k \in \mathcal{Q}\label{color_lin}\\
        &\; {|U||V\setminus U|}\cdot\sum_{\{u,v\} \in \delta(U)} y_{uv} \ge \sum_{i \in U}\sum_{j \notin U} \sum\limits_{k \in \mathcal{Q}} z_{ij}^k & \forall\; U \subset V\label{connectivity_lin}\\
        &\; y_{ij} \le \sum\limits_{k\in \mathcal{Q}} z_{ij}^k & \forall\; \{i,j\} \in E\label{y_vars_lin} \\
        &\; z_{ij}^k \ge x_{i}^{k} + x_{j}^{k} -1,  \quad z_{ij}^k \le x_{i}^{k}, \quad z_{ij}^k \le x_{j}^{k}  & \forall\; i,j \in V
        ,\ k \in \mathcal{Q}\label{mccormick}\\
        &\; x \in \{0,1\}^{|V|\times Q}, y \in \{0,1\}^{|E|}, z \in [0,1]^{|V|\times|V|\times Q}. &
    \end{align}
\end{subequations}
Constraints are the same defining $\mathcal{F}$ with the addition of \eqref{mccormick}.

Note that \textit{connectivity constraints}~\eqref{connectivity} can be disaggregated and formulated as
\begin{equation}\label{connectivity:disag}
    \quad \sum_{\{u,v\} \in \delta(U)} y_{uv} \ge x_{i}^{k}x_{j}^{k} \qquad \forall\; U \subset V, i \in U, j \notin U, k \in \mathcal{Q},
\end{equation}
and the linearized version~\eqref{connectivity_lin} as 
\begin{equation}\label{connectivity:disag_lin}
 \quad \sum_{\{u,v\} \in \delta(U)} y_{uv} \ge z_{ij}^{k} \qquad \forall\; U \subset V, i \in U, j \notin U, k \in \mathcal{Q},
\end{equation}
obtaining a tighter formulation but with many more constraints.
On the contrary, the $|E|$ \textit{edge constraints}~\eqref{y_vars} can be replaced by the following exponentially many \textit{path constraints}:
\begin{align}
& \sum_{\{u,v\}\in P_{ij}} y_{uv} \leq \left(|P_{ij}|-1\right)+\sum_{k \in \mathcal{Q}}x_i^kx_j^k & \quad \forall\ P_{ij}\,:\,i,j\in V,\, |P_{ij}| \ge 1, \label{path-ineqs}
\end{align}

\noindent with $P_{ij}$ being a path connecting nodes $i$ and $j$ through at least one edge (the case of a path with one edge only corresponds to inequalities~\eqref{y_vars}). For each of these paths, if nodes $i$ and $j$ belong to different components, the number of edges along the path that remain in the graph is constrained to be at most $|P_{ij}|-1$. We remark that constraints~\eqref{y_vars} imply these inequalities. Indeed, along with any path connecting $i$ and $j$, there exists a node $u$ which belongs to a different component w.r.t.\ its neighbor $v$ in the path, and, because of constraint~\eqref{y_vars} related to this pair of nodes, the edge $\{u,v\}$ is removed. 
In the linearized version, constraints \eqref{path-ineqs} are replaced by 
\begin{equation}
    \quad \sum_{\{u,v\}\in P_{ij}} y_{uv} \leq \left(|P_{ij}|-1\right)+\sum_{k \in \mathcal{Q}}z_{ij}^k \qquad \forall\ P_{ij}\,:\,i,j\in V,\, |P_{ij}| \ge 1. \label{path-ineqs_lin}
\end{equation}

\subsubsection{MOP Formulation}\label{subsec:MOP}
The MOP problem asks for the minimum number of edges to be removed from the graph $\mathcal{G}$ to obtain a partition into colorful components. 
A nonlinear binary formulation of the MOP problem is the following:
\begin{equation}\label{MOP}
    \max\limits_{(x,\,y)\in \mathcal{F}}\; \sum_{\{i,j\} \in E} y_{ij}.
\end{equation}
The objective function of~\eqref{MOP} maximizes the number of edges remaining in the graph, by summing up the values of the variables $y$. The feasible set $\mathcal{F}$ of variables $x$ and $y$ is described in~\eqref{feasible-set-F}. Its linearized version, involving the additional variables $z$, can be obtained by considering the feasible set $\mathcal{F}^\ell$ in~\eqref{feasible-set-F_lin} instead of $\mathcal{F}$. We notice that, contrary to the MEC and the MCC problems, in the MOP problem we do not need to guarantee the connectivity of all nodes assigned to connected component $k \in \mathcal{Q}$. Thus, connectivity constraints~\eqref{connectivity} (or \eqref{connectivity_lin}) are not necessary. However, they correspond to optimality cuts. In Section~\ref{sec:results}, we show their impact on the solution of the MOP.

\subsubsection{MEC Formulation}\label{subsec:MEC}%
The MEC and MOP problems share the same feasible set $\mathcal{F}$, imposing that a partition into colorful components of the graph must be obtained. However, while the MOP problem aims to minimize the number of removed edges, in the MEC problem the number of edges in the transitive closure is maximized. 
A nonlinear formulation for the MEC problem thus reads:
\begin{equation}\label{MEC}
    \max\limits_{(x,\,y) \in \mathcal{F}} \;\sum\limits_{k \in \mathcal{Q}} \sum_{i \in V} \sum_{j \in V : i<j} x_{i}^{k}x_{j}^{k}.
\end{equation}
The objective function of~\eqref{MEC} sums up, for each component $k$, and for each pair of nodes in $V$, the products $x_{i}^{k}x_{j}^{k}$, which represents the number of edges in the transitive closure of component $k$. Indeed, if $i$ and $j$ belong to the same component $k$ (the product is 1), they are necessarily connected, i.e., there exists an edge between $i$ and $j$ in the transitive closure. Again, by introducing the auxiliary variables $z$, the feasible set $\mathcal{F}^\ell$ involving only linear constraints can be considered instead of the set $\mathcal{F}$. The objective function will consequently read as $\sum\limits_{k \in \mathcal{Q}} \sum\limits_{i \in V} \sum\limits_{j \in V : i<j} z_{ij}^{k}.$

\subsubsection{MCC formulation}\label{subsec:MCC}
To formulate the MCC problem, in addition to the already defined variables $x$ and $y$ (and $z$), we introduce a family of binary variables $w_{k}$, for each $k \in \mathcal{Q}$, equal to 1 iff at least one node has been assigned to the $k$-th component. 
A nonlinear formulation for the MCC problem reads:\\ 
\begin{minipage}{\textwidth}
\begin{subequations}\label{MCC}
\begin{align}
    \min_{x,y,w} &\; \sum\limits_{k \in \mathcal{Q}} w_{k} &\label{MCC:obj}\\
    \text{s.t.} & \; |V| \cdot w_k \ge \sum_{i \in V} x_{i}^{k} & \forall\; k \in \mathcal{Q}\label{MCC:w_vars}\\
    &\; (x,\,y) \in \mathcal{F}, w \in \{0,1\}^{Q}. &
\end{align}
\end{subequations}
\vspace{0.05mm}
\end{minipage}
The objective function~\eqref{MCC:obj} minimizes the number of components, by summing up the values of auxiliary variables $w$. The value of variables $w$ is properly set by constraints~\eqref{MCC:w_vars}: for each $k \in \mathcal{Q}$, if there exists at least one $i \in V$ s.t.\ the related $x_{i}^{k}$ variable assumes value one, the right-hand side of the constraint is strictly greater than zero, forcing $w_k$ to be equal to one. 
The remaining constraints are defined by the set $\mathcal{F}$, described in Eq.~\eqref{feasible-set-F}. Equivalently, we can employ Fortet's reformulation to eliminate the bilinear terms, i.e., introduce variables $z$ and replace $\mathcal{F}$ with $\mathcal{F}^\ell$.

\section{Related problems and bounds on the number of colorful components}\label{sec:properties}
All formulations proposed in Sections~\ref{subsec:MOP}, \ref{subsec:MEC} and \ref{subsec:MCC} depend on the size of $\mathcal{Q}$, i.e., $Q$. A trivial upper bound is $Q=|V|$. However, one can reduce the size of the formulations by tightening the value of $Q$. To this aim, for the MOP problem, in Section~\ref{subsec:pairs}, we present a formulation to find the maximum number of disjoint colorful pairs (components consisting of exactly two nodes of different colors) which is then used to determine a bound on the number of colorful components in any MOP optimal solution. For the MEC and MCC problems, we instead use the notion of maximum-cardinality colorful component introduced in Section~\ref{subsec:maxcard}.
In the following, we denote as $\bar{Q}$ the upper bound on the number of colorful components in any optimal partition of the graph, whatever the problem considered.

\subsection{Maximum number of disjoint colorful pairs}\label{subsec:pairs}
We provide here a mathematical formulation for the problem of determining the maximum number of disjoint colorful pairs. This number is used in the following to derive 
a valid bound $\bar{Q}$ for the MOP problem, as described in Section~\ref{subsec:Q3}.
An integer model to compute the maximum number of disjoint colorful pairs of $\mathcal{G}$ can be formulated using a binary variable $Y_{ij}$ which is $1$ if and only if edge $\{i,j\}$ is selected, for each $\{i,j\} \in E$. Let us define $A^c_i$ for all $i \in V$ and $c\in C$ as a binary parameter which is $1$ if $c_i = c$ and $0$ otherwise. The formulation is given below.\\
\begin{minipage}{\textwidth}
\begin{subequations}\label{pairs}
    \begin{align}
    \max\limits_{Y} &\sum\limits_{\{i,j\} \in E} Y_{ij} &\\
    \text{s.t.} & \;\sum_{\{i,j\} \in \delta(i)} Y_{ij} \leq 1 & \forall\ i \in V \label{MIP-pairs-delta_i}\\
    & \;Y_{ij} \leq 1 - \sum_{c \in C} A_i^c  A_j^c & \forall\ \{i,j\} \in E \label{MIP-pairs-Y}\\
    & \; Y\in \{0,1\}^{|E|}, & 
  \end{align}
\end{subequations}\vspace{0.05mm}
\end{minipage}
with $\delta(i)$ the set of edges $\{i,j\}$ for all $j \in V$, i.e., the edges having $i$ as one endpoint.
Constraints~\eqref{MIP-pairs-delta_i} allow for the selection of at most one edge incident on each node, while constraints~\eqref{MIP-pairs-Y} prevent the selection of all the edges linking nodes of the same color.
This model can be used to derive a valid upper bound on the number of colorful components of any MOP optimal solution, as described in Section~\ref{subsec:Q3}.

\subsection{Tighter bound 
on the number of colorful components in a MOP optimal solution}\label{subsec:Q3}
A valid upper bound on the number of colorful components for any MOP optimal solution can be obtained by determining the maximum number of disjoint colorful pairs, as stated in the following theorem. 
\begin{theorem}
\label{th:bound-Q-MOP}
    Being $\left\{S_i\right\}_{i=1}^k$ a collection of $k$ disjoint colorful pairs of $\mathcal{G}$, $\bar{Q}= |V|-k$ is an upper bound on the number of colorful components in any MOP optimal solution.
\end{theorem}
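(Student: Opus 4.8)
The plan is to fix an arbitrary optimal MOP solution, described by its set of kept edges $K = E\setminus E'$, and to bound the number $Q$ of its connected components (the colorful components of the partition). Writing $C_1,\dots,C_Q$ for these components, each is connected, so a spanning forest $F$ of $(V,K)$ has exactly $\sum_{j=1}^{Q}(|C_j|-1)=|V|-Q$ edges. Hence the claim $Q\le |V|-k$ is equivalent to the forest inequality
\[
\sum_{j=1}^{Q}\bigl(|C_j|-1\bigr)\;=\;|V|-Q\;\ge\;k,
\]
that is, to showing that the kept-edge subgraph of \emph{any} optimal solution carries a spanning forest with at least $k$ edges. The whole argument therefore reduces to charging the $k$ disjoint colorful pairs $S_1,\dots,S_k$ injectively to edges of $F$.

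Before charging, I would extract the structure forced by optimality. Since the objective $\sum_{\{i,j\}\in E}y_{ij}$ maximizes the number of kept edges, an optimal solution is edge-maximal in two senses: (i) every edge of $\mathcal{G}$ with both endpoints inside a single component $C_j$ is kept, for otherwise adding it would increase the objective without changing the partition; and (ii) no two distinct components $C_a,C_b$ joined by an edge of $\mathcal{G}$ can have $C_a\cup C_b$ colorful, for otherwise merging them and keeping that edge would strictly improve the objective. Fact (ii) is the key leverage: for every pair $S_i=\{u_i,v_i\}$ whose endpoints lie in two different components $C_a\ni u_i$, $C_b\ni v_i$, the set $C_a\cup C_b$ must repeat a color, so there are two distinct equally-colored nodes, one in $C_a$ and one in $C_b$, even though $u_i$ and $v_i$ themselves are differently colored.

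With this in hand I would build the charge $S_i\mapsto e(S_i)\in F$ as follows. For a pair $S_i$ whose endpoints share a component $C_j$, I charge it to an edge of the unique $F$-path joining $u_i$ and $v_i$; since the pairs are vertex-disjoint, within each component the number of such pairs never exceeds $|C_j|-1$, so these charges can be made injective by a standard path-exchange (matroid) argument. For a pair $S_i$ that crosses between two components, I would use the color witnesses supplied by Fact (ii) to locate, in the two incident components, forest edges assignable to $S_i$. The goal is to prove that the union of all these charges is injective, which delivers $k$ distinct forest edges and hence the forest inequality.

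The step I expect to be the main obstacle is precisely the cross pairs. When many pairs cross among the same few components, the color witnesses guaranteed by optimality may coincide or be shared, and one must show that the forest still contains enough distinct edges to absorb all cross charges — essentially a Hall/deficiency condition on the bipartite incidence between cross pairs and forest edges, driven entirely by the ``each color at most once per component'' constraint. I anticipate handling this by a contradiction-and-augmentation scheme: assuming $|V|-Q<k$, the matroid exchange property produces a pair crossing two components, and I would toggle kept edges along an alternating walk (pair, kept edge, pair, $\dots$) to strictly raise the number of kept edges while preserving colorfulness of every component, contradicting optimality. Making this augmentation colorfulness-preserving — so that toggling never creates a repeated color in a merged component, and so that it yields a \emph{strict} improvement rather than a tie — is the delicate crux on which the whole proof turns.
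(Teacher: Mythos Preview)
The paper relegates its proof to Online Appendix~A, which is not reproduced in the provided source, so a line-by-line comparison is not possible. What the surrounding text does make explicit, however, is the intended leverage: the $k$ disjoint colorful pairs together with the remaining $|V|-2k$ singletons already form a \emph{feasible} MOP solution with exactly $|V|-k$ components and objective value $k$. The paper's argument almost certainly proceeds by comparing an arbitrary optimum against this reference partition, not by an intrinsic charging inside the optimum. You never use the feasibility of the pairs-plus-singletons partition --- only that each $S_i$ is an edge of $\mathcal G$ --- and that is very likely the missing ingredient relative to the paper's route.

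On its own terms, your proposal is not a proof but a plan whose decisive step is left open. The reformulation $Q\le |V|-k \Longleftrightarrow |F|\ge k$ is correct, and your Facts~(i) and~(ii) are exactly the right consequences of optimality. The internal-pair charging is fine. The gap is the cross-pair case, which you yourself identify as ``the delicate crux'' and then do not resolve. Fact~(ii) gives, for each crossing pair, only the existence of \emph{some} repeated color between the two incident components; it does not manufacture a forest edge, let alone one distinct from those already claimed by other pairs. Your proposed fix --- ``toggle kept edges along an alternating walk (pair, kept edge, pair, \dots)'' --- is not specified: you do not say which edges enter and leave, why every component touched remains colorful after toggling, or why the number of kept edges strictly increases rather than stays equal. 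Concretely, detaching a vertex $v_i$ from its component $C_b$ to form the pair $\{u_i,v_i\}$ costs $\deg_{C_b}(v_i)$ kept edges and gains one; this is never an improvement unless $v_i$ is a leaf of $C_b$, and nothing in your argument forces that. Until this augmentation is made precise and shown to terminate with a strict gain, the contradiction with optimality is asserted, not established.
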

\begin{proof}{Proof.}
Proof in Online Appendix~A.
\end{proof}

The procedure to compute such a bound is shown in Algorithm~\ref{algo:heu3}. A feasible solution for the MOP problem is constructed by solving formulation~\eqref{pairs}, getting the corresponding set $\{S_i\}_{i=1}^k$ of colorful components, and then including the remaining singletons. This solution can be used as a warm-start to solve exactly the MOP problem. The corresponding number of components gives an upper bound on the number of colorful components in any MOP optimal solution, i.e., a valid $\bar{Q}$.
\setlength{\intextsep}{2pt}
\begin{algorithm}[ht]\caption{Computing a feasible solution and $\bar{Q}$ for the MOP problem}
    \LinesNumbered
    \SetAlgoLined 
    \label{algo:heu3}
    \KwData{Graph $\mathcal{G}$.} 
    Solve formulation~\eqref{pairs}, obtaining a set of $k$ colorful pairs $\mathcal{\bar{S}} = \left\{S_i\right\}_{i=1}^k$ of $\mathcal{G}$.\\
    Set $\bar{Q}=|V|-k.$\\
    \lForEach{$u \in V \setminus \left\{\bigcup_{i=1}^{k} S_i\right\}$}{
        Set $S = \{u\}$ and $\bar{\mathcal{S}} = \bar{\mathcal{S}} \cup \{S\}.$\DontPrintSemicolon}
    \Return $\bar{\mathcal{S}},\bar{Q}$
\end{algorithm}
\subsection{Maximum-cardinality colorful component}\label{subsec:maxcard}
The colorful component of $\mathcal{G}$ of maximum cardinality can be identified by solving the following formulation, involving two sets of binary variables: for each node $i\in V$, the binary variable $X_i$, which is $1$ if node $i$ is part of the maximum-cardinality colorful component, $0$ otherwise; for each edge $\{i,j\} \in E$, the binary variable $Y_{ij}$ which is $1$ if edge $\{i,j\}$ is selected, $0$ otherwise. 
The problem can be formulated as follows:

\begin{minipage}{\textwidth}
\begin{subequations}\label{MaxCC}
\begin{align}
    \max_{X,Y} &\; \sum_{i \in V} X_{i} &\label{MaxCC:obj}\\
    \text{s.t.} & \;\sum_{\{i,j\} \in E} Y_{ij} = \sum_{i \in V} X_{i} - 1 \label{MaxCC:tree}\\
    &\; \sum_{i \in V^c} X_{i} \le 1 & \forall\; c \in C \label{MaxCC:color}\\
    &\; \sum_{\{i,j\} \in E(U)} Y_{ij} \le |U|-1 & \forall\; U \subseteq V \label{MaxCC:subtour}\\
    &\quad  Y_{ij} \le X_i, \quad Y_{ij} \le X_j & \forall\; \{i,j\}\in E \label{MaxCC:yx2}\\
    &\quad  X \in \{0,1\}^{|V|}, \; Y \in \{0,1\}^{|E|},
\end{align}
\end{subequations}
\vspace{0.05mm}
\end{minipage}
with $E(U)$ defining the set of edges with both their endpoints in $U$. The objective function~\eqref{MaxCC:obj} gives the cardinality of the component.
Constraint~\eqref{MaxCC:tree} ensures the existence of a tree connecting all the nodes in the component (for ease of modeling, the constraint to identify a connected component is replaced by the search for an underlying tree, spanning all the nodes in the component without creating subtours).
Constraints~\eqref{MaxCC:color} guarantee the occurrence of each color at most once in the component.
Finally, constraints~\eqref{MaxCC:subtour} ensure that no subtour is contained in the selected edges.
Constraints~\eqref{MaxCC:yx2} link the $Y$ and $X$ variables, by imposing that, if $i$ or $j$ are not in the component, the edge $\{i,j\}$ is not selected either. The computational complexity of this optimization problem is unknown and exploring this aspect is an interesting direction of research.

\subsection{Tighter bound 
on the number of colorful components in a MEC optimal solution}\label{subsubsec:Q1}
We first introduce the following lemma, which states that the cardinality of the transitive closure of a component containing $n$ nodes is not smaller than the one of $k$ components whose sum of nodes is $n$.

\begin{lemma}\label{th:comp-1-to-k}
    The transitive closure of a component $S$, with $n=|S|$, contains at least as many edges as the transitive closures of $k$ components $S_1, \dots, S_k$, such that $n_i=|S_i|$, $\forall\ i \in \{1, \dots, k\}$, and $n=\sum_{i=1}^{k} n_i$, with $n, n_1, \dots, n_k \in \mathbb{N}$.
\end{lemma}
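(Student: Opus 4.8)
The statement to prove is Lemma~\ref{th:comp-1-to-k}: the transitive closure of a single component on $n$ nodes has at least as many edges as the combined transitive closures of $k$ components of sizes $n_1,\dots,n_k$ summing to $n$. The transitive closure of a component on $m$ nodes is a clique, so it has exactly $\binom{m}{2}$ edges. Hence the claim reduces to the purely arithmetic inequality
\[
\binom{n}{2} \;\ge\; \sum_{i=1}^{k}\binom{n_i}{2}, \qquad \text{whenever } n=\sum_{i=1}^k n_i,\ n_i\in\mathbb{N}.
\]
So the first step I would take is to make this reduction explicit: invoke the definition of transitive closure given at the start of Section~\ref{sec:def1} (each component becomes a clique on its node set), count edges, and reduce to proving the displayed convexity-type inequality.

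\medskip
\textbf{Main argument.} I would prove the arithmetic inequality by a short induction on $k$ (or, equivalently, by iterating the case $k=2$). For $k=2$ one checks directly that
\[
\binom{n_1+n_2}{2} - \binom{n_1}{2} - \binom{n_2}{2} \;=\; n_1 n_2 \;\ge\; 0,
\]
which is the combinatorial identity that the edges of the big clique split into the edges inside the first part, the edges inside the second part, and the $n_1 n_2$ edges across. For the inductive step, group $S_1,\dots,S_{k}$ as $S_1$ together with a block of size $n_1' = n_2+\dots+n_k$; apply the $k=2$ case to get $\binom{n}{2}\ge \binom{n_1}{2}+\binom{n_1'}{2}$, then apply the induction hypothesis to the block of size $n_1'$ partitioned as $n_2+\dots+n_k$ to bound $\binom{n_1'}{2}\ge\sum_{i=2}^k\binom{n_i}{2}$. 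Combining gives the result. Alternatively, one can phrase the whole thing in one line: building the clique on $n$ nodes from the $k$ cliques requires adding all $\sum_{i<j} n_i n_j \ge 0$ inter-block edges, and adding edges only increases the count.

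\medskip
\textbf{Obstacle.} There is essentially no hard step here — the only thing to be careful about is the translation from the graph-theoretic statement (``transitive closure of a component'') to the edge count $\binom{m}{2}$, i.e., making sure the reader accepts that a connected component's transitive closure is exactly the complete graph on its vertices, which follows immediately from the definition. Everything after that is the elementary identity $\binom{a+b}{2}=\binom{a}{2}+\binom{b}{2}+ab$ and a trivial induction; I would keep it to a few lines and not belabor it. I would also note, for completeness, that the inequality is tight precisely when at most one $n_i$ is positive, which matches the intuition that merging components never decreases the transitive closure size and strictly increases it as soon as two nonempty components are merged — this is the fact that will be used when applying the lemma to bound $\bar Q$ for the MEC problem.
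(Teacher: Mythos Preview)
Your proposal is correct and matches the paper's own argument: the paper also observes that the transitive closure of $S$ is the clique on $n$ nodes and that any partition into subcliques $\varphi_1,\dots,\varphi_k$ can only lose edges (namely the $\sum_{i<j}n_in_j$ cross-block ones), which is exactly your reduction to $\binom{n}{2}\ge\sum_i\binom{n_i}{2}$ and your ``one-line'' alternative. The paper states this in a single sentence without the explicit induction, but the content is identical.
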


Lemma~\ref{th:comp-1-to-k} follows from the fact that the transitive closure of component $S$ can be seen as a clique $\varphi$ with $n$ nodes and $\frac{n(n-1)}{2}$ edges and, given any subcliques partition $\Phi=\{\varphi_i\}_{i=1,\dots,k}$ of $\varphi$, the number of edges in $\varphi$ is not smaller than the sum of the ones in $\varphi_1,\dots,\varphi_k$.

Before introducing the following theorem, let us observe that removing $k$ nodes from any component $S$ of $n$ nodes entails a decrease in the number of edges in the transitive closure of $S$ equal to:
{\small\begin{equation}
    \sum_{i=1}^{k} (n-i) = kn - \frac{k(k+1)}{2}.
\end{equation}}
Analogously, adding $k$ nodes to a component $S$ of $n$ nodes entails an increase equal to:
{\small\begin{equation}
    \sum_{i=1}^{k} (n-1+i) = kn -k + \frac{k(k+1)}{2}.
\end{equation}}

The following theorem gives a tighter bound on the maximum number of colorful components associated with any MEC optimal solution, i.e., a valid value of $\bar{Q}$.

\begin{theorem}\label{th:ub}
If $S$ is a colorful component of a graph $\mathcal{G}$ (not necessarily of maximum cardinality), then $\bar{Q}=|V \setminus S|+1$ is an upper bound on the number of colorful components in any MEC optimal solution.
\end{theorem}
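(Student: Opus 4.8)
The plan is to argue by contradiction: suppose there is a MEC optimal solution whose partition $\mathcal{P}$ consists of $Q^\star > |V\setminus S| + 1$ colorful components. I want to show this contradicts optimality by exhibiting a feasible partition with strictly larger transitive closure. The starting observation is that $S$ itself is a feasible colorful component, so $\{S\}$ together with the $|V\setminus S|$ singletons formed by the remaining nodes is a feasible partition with exactly $|V\setminus S|+1$ components; call it $\mathcal{P}_S$. The real content is that $\mathcal{P}_S$ already does at least as well as any partition with more than $|V\setminus S|+1$ components, and in fact we can do better than $\mathcal{P}$.

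The key step is a merging/regrouping argument built on Lemma~\ref{th:comp-1-to-k}. Since $\mathcal{P}$ has more than $|V\setminus S|+1$ components, at least two of its components lie entirely inside $V\setminus S$ — more precisely, a counting argument shows that the number of components of $\mathcal{P}$ that are contained in $V\setminus S$ is at least $Q^\star - 1 > |V\setminus S|$ would be impossible as a partition count unless... so the cleaner route is: take the component $S'\in\mathcal{P}$ that contains the largest number of nodes of $S$, and observe the remaining $Q^\star - 1$ components partition a set of at most $|V|-|S'\cap S|$ nodes; since $Q^\star - 1 > |V\setminus S|\ge |V| - |S|$, by pigeonhole some node of $S$ sits in a component other than $S'$, or two components can be combined. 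The cleanest version: I would start from $\mathcal{P}$ and repeatedly apply the operation "remove a node $v$ from its current component and attach it to the partial copy of $S$ we are building," tracking the net change in transitive-closure size using the two displayed formulas for node removal and node addition that immediately precede the theorem. Carrying this out until all of $S$ is reassembled into one component yields a partition that is at least as good, and since $Q^\star$ was strictly larger than needed, at least one step is a strict improvement (the decrease $kn - k(k+1)/2$ from a small donor component is strictly less than the increase $kn' - k + k(k+1)/2$ when attaching to the larger growing copy of $S$, whenever the growing component is already larger — which it eventually is since $|S|\ge 1$ and we are consolidating). This contradicts optimality of $\mathcal{P}$.

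Alternatively, and perhaps more robustly, I would phrase it as: given any feasible partition $\mathcal{P}$ with $Q$ components, one can always transform it (without decreasing the objective) into a partition in which $S$ appears as a single component, by moving nodes of $S$ out of their scattered components into one designated component and using Lemma~\ref{th:comp-1-to-k} to argue that each such "un-split and re-merge" of a color class does not hurt; the resulting partition has at most $|V\setminus S|+1$ components. Hence every optimal partition is equivalent in objective value to one with at most $|V\setminus S|+1$ components, so $\bar{Q}=|V\setminus S|+1$ is a valid bound on the number of components needed to attain the optimum.

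I expect the main obstacle to be making the "move a node out of one component and into another" step rigorous while preserving feasibility — specifically, colorfulness and connectivity. Colorfulness of the growing copy of $S$ is automatic because $S$ is colorful, but I must ensure the donor component remains a valid (connected, colorful) component after a node is removed, which is not true for an arbitrary node of an arbitrary component. The fix is to build the copy of $S$ directly as its own brand-new component (start it empty, then add the nodes of $S$ one at a time), leaving every original component of $\mathcal{P}$ that is disjoint from $S$ untouched and simply deleting the nodes of $S$ from the components that contained them; deleting nodes can only break a connected component into smaller connected pieces, each of which is still colorful, so feasibility is maintained, and by Lemma~\ref{th:comp-1-to-k} the transitive closure of those pieces is no worse than before the split — but this step increases the component count, so I must then also argue the added contribution from assembling $S$ compensates. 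Balancing the bookkeeping of these two opposing effects via the two displayed increment/decrement formulas is the delicate part; everything else is routine.
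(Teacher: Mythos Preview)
Your proposal identifies the right benchmark --- the feasible partition $\mathcal{P}_S = \{S\} \cup \{\{v\} : v \in V \setminus S\}$ with transitive-closure value $\binom{|S|}{2}$ --- but the ``reassemble $S$'' strategy you build around it is both unnecessary and, as stated, not correct.

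The general claim in your third paragraph, that \emph{any} feasible partition can be transformed without decreasing the objective into one where $S$ is a single block, is false. With $S=\{s_1,s_2\}$ and a feasible partition $\mathcal{P}=\{\{s_1,a,b\},\{s_2,c,d\}\}$, the original value is $\binom{3}{2}+\binom{3}{2}=6$, whereas after pulling $s_1,s_2$ out one is left with $\{S,\{a,b\},\{c,d\}\}$ of value $3$ (or less, if $\{a,b\}$ or $\{c,d\}$ fragment further). So reassembly can strictly hurt. The step-by-step version in your second paragraph has the same defect: in the early steps the growing copy of $S$ is smaller than the donor components and each move loses more than it gains, so ``at least one step is a strict improvement'' is not the operative mechanism. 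You rightly flag the final bookkeeping as ``delicate'' but do not resolve it, and the connectivity-after-deletion worry only compounds the difficulty.

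The paper's route, and the idea missing from your sketch, is to forget about transforming $\mathcal{P}$ and instead \emph{upper-bound its value directly}. By Lemma~\ref{th:comp-1-to-k} (equivalently, convexity of $n\mapsto\binom{n}{2}$), among all partitions of $|V|$ nodes into $Q^\star$ nonempty parts the sum $\sum_i \binom{n_i}{2}$ is maximized when one part has size $|V|-Q^\star+1$ and the rest are singletons; hence any partition with $Q^\star$ components has value at most $\binom{|V|-Q^\star+1}{2}$. If $Q^\star > |V\setminus S|+1$ then $|V|-Q^\star+1 < |S|$, so this bound is strictly below $\binom{|S|}{2}$, the value of the feasible $\mathcal{P}_S$; therefore $\mathcal{P}$ is not optimal. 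That is the whole argument: one comparison, no node-moving, and no feasibility concerns (the extremal partition is used only as an upper bound, not as a candidate solution).
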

\begin{proof}{Proof.}
Proof in Online Appendix~B.
\end{proof}

We now prove the following theorem that is related to Theorem~\ref{th:ub} and refers to the case in which two or more maximum-cardinality colorful components are available. 

\begin{theorem}\label{th:ub_MEC}
Given $k\geq 2$ maximum-cardinality disjoint colorful components of $\mathcal{G}$, hereinafter denoted by $S_i$ for $i=1,\dots, k$, with $n=|S_i|$ for all $i$, then $\bar{Q}=|V \setminus \left\{\bigcup_i S_i\right\}|+k$ is an upper bound on the number of colorful components in any MEC optimal solution.
\end{theorem}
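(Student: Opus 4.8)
The plan is to argue by contradiction, in the same spirit as the proof of Theorem~\ref{th:ub}: assuming a MEC optimal partition with strictly more than $\bar{Q}$ components, I will exhibit a feasible partition whose transitive closure strictly dominates it, contradicting optimality. Write $m=|V\setminus\bigcup_i S_i|=|V|-kn$, so that $\bar{Q}=m+k$. Suppose, for contradiction, that $\mathcal{P}=\{P_1,\dots,P_q\}$ is an optimal MEC solution with $q\ge\bar{Q}+1=m+k+1$. Since $\sum_{i=1}^q|P_i|=|V|=kn+m$, this hypothesis is equivalent to $\sum_{i=1}^q(|P_i|-1)=kn+m-q\le k(n-1)-1$.

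Next I would introduce the comparison solution $\mathcal{P}^\star=\{S_1,\dots,S_k\}\cup\{\{v\}:v\in V\setminus\bigcup_i S_i\}$. This is a feasible partition of $\mathcal{G}$ into colorful components: the $S_i$ are disjoint colorful components by hypothesis, each $\mathcal{G}[S_i]$ being connected; the singletons are trivially colorful; and together these parts cover $V$, so keeping exactly the edges internal to each part realizes this partition. Its transitive closure has $k\binom{n}{2}$ edges (the singletons contribute nothing), and it has exactly $m+k=\bar{Q}$ components.

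The key step is the size bound. Because $S_1,\dots,S_k$ are \emph{maximum-cardinality} colorful components, every colorful component of $\mathcal{G}$ --- in particular each part $P_i$ of the feasible partition $\mathcal{P}$ --- satisfies $|P_i|\le n$. Writing $n_i=|P_i|$ and using $n_i(n_i-1)\le n(n_i-1)$ (valid since $1\le n_i\le n$), the number of transitive-closure edges of $\mathcal{P}$ satisfies
\[
\sum_{i=1}^q\binom{n_i}{2}=\frac12\sum_{i=1}^q n_i(n_i-1)\le\frac{n}{2}\sum_{i=1}^q(n_i-1)\le\frac{n}{2}\bigl(k(n-1)-1\bigr)=k\binom{n}{2}-\frac{n}{2}<k\binom{n}{2}.
\]
Hence $\mathcal{P}^\star$ has strictly more transitive-closure edges than $\mathcal{P}$, contradicting the optimality of $\mathcal{P}$; therefore $q\le\bar{Q}$, as claimed. (The corner case $n=1$ makes $\bar{Q}=|V|$, which is trivially an upper bound, so one may assume $n\ge2$; the displayed computation in fact covers it anyway.)

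The only nontrivial ingredient --- and the reason this bound improves on the one obtained by applying Theorem~\ref{th:ub} to a single $S_i$ --- is the \emph{uniform} bound $|P_i|\le n$ on the sizes of \textit{all} parts in any feasible solution, which is exactly what the maximum-cardinality hypothesis provides; everything else is bookkeeping with $\binom{\cdot}{2}$, and the argument mirrors the proof of Theorem~\ref{th:ub} in Online Appendix~B. The statement is phrased for $k\ge2$ since the case $k=1$ is precisely the maximum-cardinality instance of Theorem~\ref{th:ub}.
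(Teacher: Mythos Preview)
Your argument is correct. The overall strategy---assume an optimal partition $\mathcal{P}$ with more than $\bar{Q}$ parts, compare it to the reference partition $\mathcal{P}^\star=\{S_1,\dots,S_k\}\cup\{\text{singletons}\}$, and derive a contradiction---is the same as the paper's. The crucial observation, that every part of any feasible partition has size at most $n$ because the $S_i$ are maximum-cardinality colorful components, is also identical.

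Where you differ is in the bookkeeping step. The paper's proof (Online Appendix~C) leans on Lemma~\ref{th:comp-1-to-k} and on the add/remove-node formulas displayed just before the theorem, effectively arguing by rebalancing that, subject to the cap $n_i\le n$, the transitive closure is maximized by filling components of size $n$. Your route is more direct: the single inequality $n_i(n_i-1)\le n(n_i-1)$, summed and combined with $\sum_i(n_i-1)\le k(n-1)-1$, gives $\sum_i\binom{n_i}{2}\le k\binom{n}{2}-\tfrac{n}{2}<k\binom{n}{2}$ in one line. This bypasses Lemma~\ref{th:comp-1-to-k} entirely and is arguably cleaner. One small quibble: your parenthetical that the display ``covers'' the case $n=1$ is slightly off---what actually happens is that the hypothesis $q\ge|V|+1$ is vacuous (there are only $|V|$ nodes), so the contradiction never needs the displayed inequality; but this does not affect correctness.
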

\begin{proof}{Proof.}
Proof in Online Appendix~C.
\end{proof}

Note that in Theorem~\ref{th:ub_MEC}, contrary to Theorem~\ref{th:ub}, the components have to be of maximum cardinality, otherwise the result does not hold.
Note also that this theorem gives an upper bound $\bar{Q}$ on the number of colorful components by considering a MEC solution containing $k$ maximum-cardinality disjoint colorful components, regardless of the value of $k$. The tightest upper bound is related to the solution associated with the largest value of $k$. However, any solution provides a valid upper bound. Therefore, we propose Algorithm~\ref{algo:heu1},  which heuristically finds a sequence of maximum-cardinality disjoint colorful components by iteratively solving formulation~\eqref{MaxCC} with the additional constraint imposing that the cardinality of the component is equal to $n$. At each iteration $i$, a subgraph of $\mathcal{G}$ is considered by removing the already-found components $S_0, S_1, \dots S_{i-1}$. We also note that Algorithm~\ref{algo:heu1} provides a MEC feasible solution.

\setlength{\intextsep}{2pt}
\begin{algorithm}[ht]\caption{Computing a feasible solution and $\bar{Q}$ for MEC problem}
    \LinesNumbered
    \SetAlgoLined 
    \label{algo:heu1}
    \KwData{Graph $\mathcal{G}$.} 
    Solve formulation~\eqref{MaxCC}, obtaining the maximum-cardinality colorful component $S_0$ of $\mathcal{G}$.\\
    Set $n = |S_0|$, $V = V\setminus S_0$, $\mathcal{G} = \mathcal{G}[V]$ and $\bar{\mathcal{S}}= \{S_0\}$. \\
    \While{$|V| \geq n$}
    {Solve $(P)$ defined as formulation~\eqref{MaxCC} with the additional constraint $\sum_{i \in V} X_i = n$.\\
    \lIf{$(P)$ is not feasible}{\textit{break}.\DontPrintSemicolon}
    Let $S$ be the maximum-cardinality colorful component of $\mathcal{G}$ corresponding to the optimal solution of $(P)$. \\
    Set $\bar{\mathcal{S}}= \bar{\mathcal{S}} \cup \{S\}$, $V = V\setminus S$ and $\mathcal{G} = \mathcal{G}[V]$.\\}
    \lForEach{$u \in V$}{
        Set $S = \{u\}$ and  $\bar{\mathcal{S}} = \bar{\mathcal{S}} \cup \{S\}.$\DontPrintSemicolon
    }
    Set $\bar{Q}=|\bar{\mathcal{S}}|$.\\
    \Return $\bar{\mathcal{S}},\bar{Q}$
\end{algorithm}

\subsection{Tighter bound on the number of colorful components in a MCC optimal solution}\label{subsubsec:Q2}
An upper bound on the number of colorful components of any MCC optimal solution, with respect to the trivial bound $Q=|V|$, is obtained by computing any feasible solution for the problem. We thus propose the following heuristic algorithm, Algorithm~\ref{algo:heu2}, which computes a \textit{non-trivial} solution (a trivial solution is the one composed by $|V|$ singletons) for the MCC problem.
\setlength{\intextsep}{2pt}
\begin{algorithm}[ht]\caption{Computing a feasible solution and $\bar{Q}$ for MCC problem}
    \LinesNumbered
    \SetAlgoLined 
    \label{algo:heu2}
    \KwData{Graph $\mathcal{G}$.} 
    Set $\bar{\mathcal{S}}=\emptyset$.\\
    \While{$|V| > 0$}
    {Solve formulation~\eqref{MaxCC}, obtaining the maximum-cardinality colorful component $S$ of $\mathcal{G}$.\\
    Set $\bar{\mathcal{S}}=\bar{\mathcal{S}} \cup \{S\}$, $V=V\setminus S$ and $\mathcal{G} = \mathcal{G}[V]$.\\}
    Set $\bar{Q}=|\bar{\mathcal{S}}|.$\\
    \Return $\bar{\mathcal{S}},\bar{Q}$
\end{algorithm}

In the same vein as Algorithm~\ref{algo:heu1}, the algorithm computes the colorful component of maximum cardinality (line~3) and removes it from the graph (line~4), until the graph is empty. In this way, a sequence of colorful components of non-increasing cardinality is obtained, together with a bound $\bar{Q}$.
\section{Branch-and-cut algorithm}\label{sec:bec}
In this section, we present the branch-and-cut algorithm we use to solve the formulations presented above. The general scheme is similar across all problems, with differences related to valid inequalities and preprocessing techniques. For all formulations, connectivity constraints and path constraints are added dynamically, and the corresponding separation algorithms are presented in Section~\ref{subsec:separation}. Section~\ref{subsec:validineq} presents valid inequalities used to strengthen formulations. Some of them are specific to one problem only while others are valid for all problems. Finally, in Section~\ref{subsec:preprocessing}, we describe a preprocessing technique for the MOP problem.

\subsection{Separation of connectivity and path constraints}\label{subsec:separation}
We start describing the procedure we use to separate the aggregated connectivity constraints of type~\eqref{connectivity} or their linearized version~\eqref{connectivity_lin} while solving formulation~\eqref{MOP} for the MOP problem (we recall that in this case they are valid inequalities), \eqref{MEC} for the MEC problem and \eqref{MCC} for the MCC problem, respectively. The procedure also works for the disaggregated version of the constraints, i.e., constraints~\eqref{connectivity:disag} and~\eqref{connectivity:disag_lin}.

Connectivity constraints are separated on integer solutions only and the separation procedure, whose pseudo-code is reported in Algorithm~5 in Online Appendix~D, works as follows. It takes as input the original graph $\mathcal{G}$ and an integer solution $(\bar{{x}},\bar{{y}})$ (or $(\bar{{x}},\bar{{y}}, \bar{{z}})$  for the linearized version) and adds to the model any constraint of type~\eqref{connectivity} or \eqref{connectivity:disag} violated by $(\bar{x},\bar{y})$ (or of type~\eqref{connectivity_lin} or \eqref{connectivity:disag_lin} violated by $(\bar{x},\bar{y},\bar{z})$).
After computing the set of components of the support graph $\bar{\mathcal{G}}$, the algorithm checks whether, for each component $\bar S$, there exists a node not belonging to $\bar S$ which has been assigned to the same colorful component $k$ as a node in $\bar S$. If this is the case, the corresponding violated connectivity constraint is added to the formulation.

Path constraints~\eqref{path-ineqs} (or their linearized version~\eqref{path-ineqs_lin}) are also separated on integer solutions. The separation procedure, described in Algorithm~6 in Online Appendix~D, begins similarly to Algorithm~5 by computing the connected components of the support graph $\bar{\mathcal{G}}$. For any pair of nodes belonging to the same component, but assigned to different colorful components, all elementary paths connecting the two nodes are identified. For each such path, a violated path constraint is added to the model.

\subsection{Valid inequalities and optimality cuts}\label{subsec:validineq}
In this section, we introduce several valid inequalities, which are used to strengthen either the MOP, MEC, or MCC formulations presented in Sections~\ref{subsec:MOP}, \ref{subsec:MEC} and~\ref{subsec:MCC}. 
Some of them are valid for all feasible solutions, while others cut off parts of the feasible domain due to symmetries and dominance conditions.

\subsubsection{Symmetry-breaking inequalities}\label{subsub:symmetry}
The following symmetry-breaking inequalities can be alternatively added to the formulations presented above. The first type of inequalities orders the indices of $\mathcal{Q}$ on the basis of the cardinality of the components:
\begin{subequations}
\begin{align}
    \label{simmetries:type-1}
    \sum_{i \in V} x_{i}^k \ge \sum_{i\in V} x_{i}^{k+1}, &\qquad \forall\; k \in \{1, \dots, Q-1\}.
\end{align}\\
The second type of inequalities requires that each node $i$ belongs to a component $k$ such that $k \leq i$:
\begin{align}
    \label{simmetries:type-2}
    \sum_{k \in \{1, \dots, i\}} x_i^k = 1, &\qquad \forall\; i \in V.
\end{align}
\end{subequations}

\subsubsection{Valid inequalities on edges connecting nodes in the same colorful component}\label{subsub:edges}
Let us consider a pair of nodes $i, j \in V$ which have been assigned to the same colorful component $k \in \mathcal{Q}$. If $\{i, j\} \in E$, on the one hand, selecting such an edge may only increase the value of the MOP objective function; on the other hand, for the MEC and MCC problems, there exists an optimal solution in which edge $\{i,j\}$ is not removed. This follows from the fact that the number of edges in the transitive closure of the graph (i.e., the MEC objective value), as well as the number of colorful components (i.e., the MCC objective value), is not affected by the selection of any edge linking nodes already assigned to the same colorful component.
Thus, the following valid inequalities can be added to  formulations~\eqref{MOP}, \eqref{MEC} and \eqref{MCC}:
\begin{equation}\label{valid-N}
    y_{ij} \ge x_{i}^{k}x_{j}^{k} \qquad \forall\; \{i,j\} \in E, k \in \mathcal{Q}.
\end{equation}

\subsubsection{Optimality cuts on the minimum number of edges for MEC and MCC problems}
Here we present some cuts on the minimum number of edges belonging to the optimal solutions of the MEC or the MCC problems. 
Concerning the MEC problem, let us consider the colorful component $S$ of $\mathcal{G}$ of maximum cardinality, obtained through formulation~\eqref{MaxCC}. Then, the minimum number of edges connecting the nodes in $S$, i.e., $|S|-1$, is a lower bound on the number of edges belonging to the optimal solution of the MEC problem. 
Hence, the following optimality cut can be added to formulation~\eqref{MEC}:
\begin{equation}\label{MEC:valid-E}
    \sum_{\{i,j\} \in E} y_{ij} \ge |S|-1.
\end{equation}

Indeed, let us assume that the optimal solution of the MEC problem has at most $|S|-2$ edges. According to Lemma~\ref{th:comp-1-to-k}, and similarly to the proof of Theorem \ref{th:ub_MEC} (Online Appendix~C), the maximum value of the transitive closure, when having $|S|-2$ edges, is obtained by considering a single component of size $|S|-1$. The value of the transitive closure is $(|S|-1)(|S|-2)/2$, which is smaller than $|S|(|S|-1)/2$, i.e., the number of edges in the transitive closure of $S$. This proves that, in any optimal solution, there are at least $|S|-1$ edges.

When considering the MCC problem, given any feasible sequence $S_1, \dots, S_k$ of $k$ colorful components, we can impose the following cut on the number of edges in an optimal solution:
\begin{equation}
    \label{MCC:valid-E}
    \sum_{\{i,j\}\in E} y_{ij} \ge \sum_{i=1}^{k} (|S_i|-1).
\end{equation}

In fact, any solution having less than $k$ connected components includes at least $\sum_{i=1}^{k} (|S_i|-1)$ edges. Also in this case, as proposed for the bound on the cardinality of $\mathcal{Q}$ in Section~\ref{subsubsec:Q2}, we can heuristically determine the sequence of colorful components $S_1, \dots, S_k$ with decreasing maximum cardinality through Algorithm \ref{algo:heu2}, and use the solution obtained to tighten the bound on the number of edges. 

\subsection{Preprocessing procedure for the MOP problem}\label{subsec:preprocessing}
When minimizing the number of edges to remove, a preprocessing procedure can be applied to derive a set of optimality cuts, related to edges that can be removed a priori, and accordingly reduce the size of an instance of the MOP problem. This procedure leverages one of the rules proposed by \cite{bruckner2012}, which relies on the concept of $t$-edge-connectivity here recalled.
\begin{definition}
    A component $\mathcal{G}[S]$ is $t$-edge-connected if, for each pair of nodes $i,j \in S$, there exist at least $t$ edge-disjoint paths in $\mathcal{G}$ connecting $i$ and $j$.
\end{definition}

In the following, we denote by \textit{edge-connectivity} $t_S$ of $\mathcal{G}[S]$, the largest $t$ for which $\mathcal{G}[S]$ is $t$-edge-connected, which corresponds to the minimum number of edges to be removed from $\mathcal{G}[S]$ to disconnect it.
The rule proposed by  \cite{bruckner2012} reads as follows.
\begin{lemma}[From Rule~2 in \cite{bruckner2012}]
    \label{lemma:rule-2}
    Given a minimal edge cut $B$ of $\mathcal{G}$, with $|B|=t$, partitioning such graph into two connected components $\mathcal{G}_B$ and $\mathcal{G} \setminus \mathcal{G}_B$, if $\mathcal{G}_B$ is colorful, $t$-edge-connected and contains all the colors associated with the nodes in $H = \{ v \in \mathcal{G} \setminus \mathcal{G}_B: \exists\ u \in \mathcal{G}_B, \{u,v\} \in B\}$, i.e., the set of nodes incident with some edge in $B$ but not in $\mathcal{G}_B$, then there exists an optimal solution in which all the edges from $B$ are removed.
\end{lemma}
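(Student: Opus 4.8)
The plan is to take an arbitrary optimal solution $E'$ to the MOP problem and transform it, without increasing the number of removed edges, into an optimal solution that contains all of $B$. Let $\mathcal{G}' = (V, E \setminus E')$ be the resulting graph in the optimal solution, whose connected components are all colorful. The key is to argue that we may ``reroute'' the cuts so that $\mathcal{G}_B$ becomes a stand-alone colorful component with $B$ fully removed, while paying at most $|B \cap E'|$ in edges elsewhere — and that $|B \cap E'| \ge \dots$ is enough to cover the cost. More precisely, first I would show that $E'$ must already remove at least $t$ edges ``in the vicinity'' of $\mathcal{G}_B$: since $\mathcal{G}_B$ is colorful and $t$-edge-connected, no two nodes of $\mathcal{G}_B$ need be separated from each other in $\mathcal{G}'$, but every node $v \in H$ shares a color with some node of $\mathcal{G}_B$ (because $\mathcal{G}_B$ contains all colors appearing in $H$), so in $\mathcal{G}'$ each such $v$ must lie in a different component from its same-colored partner in $\mathcal{G}_B$ — hence $E'$ must destroy all paths between $\mathcal{G}_B$ and $H$.

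The second step is the counting/rerouting argument. Consider the solution $E'' = (E' \setminus E(\mathcal{G}_B)) \cup B$, i.e., restore all internal edges of $\mathcal{G}_B$ that $E'$ had removed, and instead remove exactly the cut $B$. I claim $|E''| \le |E'|$ and that $\mathcal{G}'' = (V, E\setminus E'')$ is still a partition into colorful components. For the feasibility of $\mathcal{G}''$: the component containing $\mathcal{G}_B$ in $\mathcal{G}''$ is exactly $\mathcal{G}_B$ itself (since $B$ is fully removed and $\mathcal{G}_B$ is connected), which is colorful by hypothesis; every other component of $\mathcal{G}''$ is a subgraph of a component of $\mathcal{G}'$ (we only added edges inside $\mathcal{G}_B$ and removed edges of $B$, so outside $\mathcal{G}_B$ the edge set only shrank relative to... ) — here I need to be careful, so I would instead argue that outside $\mathcal{G}_B$, $E''$ removes a superset of what $E'$ removed restricted to $E \setminus E(\mathcal{G}_B)$, only possibly adding edges of $B$; hence components of $\mathcal{G}''$ restricted to $V \setminus V(\mathcal{G}_B)$ are unions-of-pieces of components of $\mathcal{G}'$ and thus still colorful. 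For the cardinality bound: $|E''| = |E'| - |E' \cap E(\mathcal{G}_B)| + |B \setminus E'|$. Since $\mathcal{G}_B$ is $t$-edge-connected with $|B| = t$, and $E'$ together with $B$ would... — the cleanest route is: $E' \cap (E(\mathcal{G}_B) \cup B)$ must itself disconnect $\mathcal{G}_B$ from $H$ inside $\mathcal{G}[V(\mathcal{G}_B) \cup H]$, which (since $B$ is a minimal such cut of size $t$ and $\mathcal{G}_B$ is $t$-edge-connected so no smaller cut lies strictly inside) forces $|E' \cap (E(\mathcal{G}_B)\cup B)| \ge t = |B|$, giving $|E' \cap E(\mathcal{G}_B)| \ge |B \setminus E'|$ and hence $|E''| \le |E'|$.

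The main obstacle I anticipate is making the cut-counting inequality $|E' \cap (E(\mathcal{G}_B) \cup B)| \ge |B|$ fully rigorous: one must combine (a) the fact that each $v \in H$ is forced into a different component from its same-colored partner in $\mathcal{G}_B$, which yields a family of node pairs that $E'$ must separate; (b) Menger's theorem / the minimality and $t$-edge-connectivity hypotheses to conclude that separating $\mathcal{G}_B$ (viewed as contracted to a single supernode, legitimate because nodes of $\mathcal{G}_B$ need not be mutually separated) from $H$ costs at least $t$ edges drawn from $E(\mathcal{G}_B) \cup B$; and (c) that $B$ being a \emph{minimal} edge cut of exactly this size means the bound is tight and the exchange is valid. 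Once this inequality is in hand, the exchange argument closes immediately, and since $E''$ is feasible with $|E''| \le |E'|$ and $E'$ was optimal, $E''$ is an optimal solution removing all of $B$, which is the claim.
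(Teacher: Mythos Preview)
The paper does not give its own proof of this lemma: it is quoted from \cite{bruckner2012} and used as a black box for the preprocessing procedure, so there is no in-paper argument to compare against.

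Your exchange strategy is the right one and is essentially correct. Defining $E'' = (E' \setminus E(\mathcal{G}_B)) \cup B$, feasibility is clean: on $V \setminus V(\mathcal{G}_B)$ the sets $E'$ and $E''$ agree (they differ only on $E(\mathcal{G}_B) \cup B$), so each component of $\mathcal{G}''$ outside $\mathcal{G}_B$ is a connected piece of $\mathcal{G}'[V \setminus V(\mathcal{G}_B)]$, hence contained in some colorful component of $\mathcal{G}'$; and $\mathcal{G}_B$ itself is a colorful component of $\mathcal{G}''$. The one place your write-up wobbles is the justification of the counting inequality. It is \emph{not} true in general that $E'$ ``must disconnect $\mathcal{G}_B$ from $H$'': a node $v \in H$ may well stay connected in $\mathcal{G}'$ to nodes of $\mathcal{G}_B$ whose color differs from $c_v$. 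The contraction/Menger detour you sketch is therefore aiming at the wrong target.

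The fix is a direct case split that uses exactly the two hypotheses. If $B \subseteq E'$ there is nothing to prove. Otherwise pick $\{u',v\} \in B \setminus E'$ with $u' \in V(\mathcal{G}_B)$, $v \in H$, and let $w$ be the (unique, by colorfulness) node of $\mathcal{G}_B$ with $c_w = c_v$; note $w \neq u'$, since otherwise the surviving edge $\{u',v\}$ would put two same-colored nodes in one component of $\mathcal{G}'$. Then $w$ and $v$ lie in different components of $\mathcal{G}'$, hence so do $w$ and $u'$; in particular $E' \cap E(\mathcal{G}_B)$ already separates $w$ from $u'$ inside $\mathcal{G}_B$. By $t$-edge-connectivity of $\mathcal{G}_B$ this forces $|E' \cap E(\mathcal{G}_B)| \ge t = |B| \ge |B \setminus E'|$, which is precisely the inequality you need for $|E''| \le |E'|$. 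With this replacement your proof goes through.
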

\begin{figure}[h]
  \caption{Example of minimal edge cut according to Lemma~\ref{lemma:rule-2}. The edges in the cut are $\{2,3\}, \{7,8\}$.}
   \centering
   \hfill
   {\includegraphics[scale=0.6]{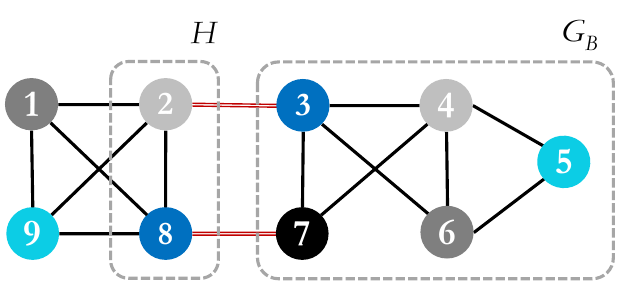}\label{fig:Preprocessing}}
  \hspace*{\fill}%
\end{figure}
\vspace{4mm}
Figure~\ref{fig:Preprocessing} shows an example of graph $\mathcal{G}$ of 9 nodes, for which $B = \{\{2,3\}, \{7,8\}\}$ is an edge cut of size $t=2$ satisfying all the properties of Lemma~\ref{lemma:rule-2}. Indeed, by removing $B$ from $\mathcal{G}$, two connected components are left, namely $\mathcal{G}[\{1,2,8,9\}]$ and $\mathcal{G}[\{3,4,5,6,7\}]$. Although both components are colorful and 2-edge-connected, Lemma~\ref{lemma:rule-2} holds only when $\mathcal{G}_B = \mathcal{G}[\{3,4,5,6,7\}]$. In this case, indeed, $H = \{2,8\}$ and all the colors associated with a node in $H$ also appear in $\mathcal{G}_B$.

To identify the largest minimal edge cut matching all the properties required by Lemma~\ref{lemma:rule-2}, we devise an integer program.
Similarly to what has been done for formulation~\eqref{MaxCC}, we identify two disjoint connected components by searching for two underlying trees, each one spanning all the nodes belonging to the same component.
In order to formulate the program, we define the following sets of binary variables:
\begin{itemize}
    \item $\alpha_i$, for each $i \in V$, s.t. $\alpha_i=1$ iff node $i$ belongs to $V_B$, namely, the set of nodes belonging to $\mathcal{G}_B$;
    \item $\beta_i$, for each $i \in V$, s.t. $\beta_i=1$ iff node $i \in H$;
    \item $\gamma_{ij}$, for each $\{i,j\} \in E$, s.t. $\gamma_{ij}=1$ iff edge $\{i,j\}$ belongs to the spanning tree associated with component $\mathcal{G}_B$ or component $\mathcal{G} \setminus \mathcal{G}_B$.
\end{itemize}
Let $N(i)$ be the set of neighbors of node $i\in V$. The formulation reads:
{\small\begin{subequations}\label{mip:rule-2}
    \begin{align}
    \max\limits_{\alpha,\beta,\gamma} &\sum\limits_{\{i,j\} \in E} \left( \alpha_{i}\beta_{j} + \alpha_{j}\beta_{i} \right) &\label{mip:rule-2-obj}\\
    \text{s.t.} &\; \sum_{i \in V} \alpha_i \ge 1, \quad \sum_{i \in V} \beta_i \ge 1 &\label{mip:rule-2-nonemptyN}\\
    &\; \beta_i \le 1-\alpha_i, \quad \beta_i \le \sum_{j \in N(i)} \alpha_j, \quad \beta_i \ge \frac{1}{|N(i)|} \sum_{j \in N(i)} \alpha_j - \alpha_i & \forall\ i \in V \label{mip:rule-2-betasetting1}\\\
    &\; \frac{1}{|V^c|}\sum_{i \in V^c} \beta_{i}  \le \sum_{i \in V^c} \alpha_{i} \le 1 & \forall\; c \in C \label{mip:rule-2-Gb-colorful}\\
    & \;\sum_{\{i,j\} \in E} \gamma_{ij} = |V|-2 \label{mip:rule-2:trees}\\
    &\; \sum_{\{i,j\} \in E(U)} \gamma_{ij} \le |U|-1 & \forall\; U \subseteq V \label{mip:rule-2:subtour}\\
    &\; \alpha_i + \alpha_j - 2 \alpha_i \alpha_j \le 1-\gamma_{ij} & \forall\ \{i,j\} \in E \label{mip:rule-2:linked-nodes}\\
    &\; \sum_{i \notin W} \alpha_i + \sum_{i \in W} (1-\alpha_i) \ge 1 & \forall\ W \subseteq V : t_W \le |\delta(W)| \label{mip:rule-2:t-edge-connectivity}\\
    &\; {\alpha} \in \{0,1\}^{|V|}, \; {\beta} \in \{0,1\}^{|V|}, {\gamma} \in \{0,1\}^{|E|}.
    \label{R2:subtour}
  \end{align}
\end{subequations}}
The objective function~\eqref{mip:rule-2-obj} maximizes the number of edges with one endpoint in $V_B$ and the other in $H$, corresponding to the size of the edge cut $B$. Constraints~\eqref{mip:rule-2-nonemptyN} prevent $V_B$ and $H$ from being empty, while constraints~\eqref{mip:rule-2-betasetting1} are imposed to correctly set the values of the $\beta$ variables, according to the connections between $\mathcal{G}_B$ and $H$: a node $i$ cannot belong to $H$ if it is in $\mathcal{G}_B$ or it has no neighbor in $\mathcal{G}_B$; conversely, $i$ must belong to $H$ if $\alpha_i=0$ and at least one neighbor of $i$ belongs to $\mathcal{G}_B$.
The right-hand side of constraints~\eqref{mip:rule-2-Gb-colorful} ensures the colorfulness of component $\mathcal{G}_B$, while the left-hand side requires that $\mathcal{G}_B$ contains all the colors associated with the nodes in $H$.
Constraints~\eqref{mip:rule-2:trees} and~\eqref{mip:rule-2:subtour} allow for the selection of exactly $|V|-2$ edges of $\mathcal{G}$, without originating cycles, which results in the identification of two disjoint trees, designated to span all the nodes in $\mathcal{G}_B$ and $\mathcal{G} \setminus \mathcal{G}_B$, respectively.
To this aim, constraints~\eqref{mip:rule-2:linked-nodes} impose that nodes linked by a selected edge belong to the same component.
Finally, if a component $\mathcal{G}[W]$ has edge-connectivity $t_W$ smaller than the size of the associated edge cut $\delta(W)$, such a component can not be selected as $\mathcal{G}_B$ and is then excluded through a no-good-cuts of type~\eqref{mip:rule-2:t-edge-connectivity}.
These cuts are separated on integer solutions only, while Algorithm~6 described by \cite{Matula1987} is used to check $t$-edge-connectivity.

The objective function~\eqref{mip:rule-2-obj} and the constraints~\eqref{mip:rule-2:linked-nodes} contain bilinear terms involving $\alpha$ and $\beta$. These terms can be linearized using techniques such as McCormick reformulation or other specialized methods. However, many of these advanced techniques are already embedded in modern solvers. Therefore, we directly provide the compact model~\eqref{mip:rule-2} to the solver used in our experiments.

Algorithm~\ref{algo:preprocessing-MOP} illustrates the preprocessing procedure proposed for the MOP problem, which relies on Lemma~\ref{lemma:rule-2} and consists in iteratively solving formulation~\eqref{mip:rule-2} to identify a largest minimal edge cut $B$, together with an associated colorful component $\mathcal{G}_B$ that will belong to the solution to the original MOP problem. In particular, at each iteration, given a solution $(\bar \alpha, \bar \beta, \bar \gamma)$ of formulation~\eqref{mip:rule-2}, then $B=\{\{i,j\} \in E : (\bar \alpha_i = 1 \land \bar \beta_j = 1) \lor (\bar \alpha_j = 1 \land \bar \beta_i = 1) \}$, and $\mathcal{G}_B = \mathcal{G}[S]$ with $S=\{i \in V: \bar \alpha_i = 1 \}$.
Graph $\mathcal{G}$ is updated by removing the identified colorful component $\mathcal{G}_B$ and the edges in $B$, which are contextually added to the set $E'$. At the end of the computation, $\mathcal{G}$ represents the preprocessed graph, while $E'$ contains all the removed edges. The value of the MOP objective function associated with the original graph can be obtained by solving the problem on the preprocessed graph, and then summing up the resulting objective function value and the number of edges contained in the colorful components $\mathcal{G}_B$ removed at each iteration of Algorithm~\ref{algo:preprocessing-MOP}.
\setlength{\intextsep}{1pt}
\begin{algorithm}[t]
\DontPrintSemicolon
\LinesNumbered
\SetAlgoLined 
\SetKwInput{Input}{Input}
\SetKwInOut{Output}{Output}
\Input{Graph $\mathcal{G}$ and set $\mathcal{Q}$.}
Set $E' = \emptyset$.\\ 
Solve formulation~\eqref{mip:rule-2} on graph $\mathcal{G}$, identifying a feasible cut $B$ and a colorful component $\mathcal{G}_B$. \\
\While{$B \neq \emptyset$}{
    Set $\mathcal{G} = \mathcal{G} \setminus \mathcal{G}_B$ and $E' = E' \cup B$.\\
    Solve formulation~\eqref{mip:rule-2} on graph $\mathcal{G}$, identifying a feasible cut $B$ and a colorful component $\mathcal{G}_B$. 
    }
\Return{$\mathcal{G}, E'$.}
\caption{Preprocessing procedure for the MOP problem}
\label{algo:preprocessing-MOP}
\end{algorithm}

\section{Computational results}\label{sec:results}
This section is dedicated to the analysis of the computational performance of the proposed mathematical formulations, in their linearized versions, examining the effect of enhancing them with the bounds, valid inequalities, warm-start, and preprocessing procedures described above.
We implemented all formulations in Python 3.10 and solved them through the Gurobi solver (version 10.0.2). All the experiments were conducted in single-thread mode, on a 3.40GHz Intel Intel(R) Core(TM) i7-3770 CPU with 16 GB RAM, by imposing a one-hour time limit and 10 GB memory limit for every run. While Python is generally slower than compiled languages, our profiling indicated that the overhead introduced by Python callbacks was negligible compared to the total solver runtime. Nonetheless, for scenarios requiring extremely high performance, re-implementing critical components in a compiled language could be considered as a potential avenue for making the solution approach even more efficient.

Source codes, benchmark instances and detailed computational results are available at the IJOC GitHub software repository associated with this paper \citep{ColorfulComponentsGithubRepo} for reproducibility and further analysis.

\subsection{Benchmark instances}
To evaluate the effectiveness of the proposed linearized formulations, we tested the instances used in \cite{bruckner2012}, generated by the authors from multiple alignment instances of the BAliBASE 3.0 benchmark \citep{Thompson2005}. Furthermore, for the instances consisting of multiple connected components, we solved the problem separately for each of them and restricted the analysis to all graphs having between 10 and 210 nodes (so as to keep the number of $z$ variables below $10^7$), resulting in a dataset of 409 instances that can be accessed at the IJOC GitHub software repository \citep{ColorfulComponentsGithubRepo}.

\subsection{Computational results}
In Online Appendix~E, we compare the performance of the two versions of the linearized connectivity constraints~\eqref{connectivity_lin} and~\eqref{connectivity:disag_lin} in Table~E.1 and the performance of the linearized edge constraints~\eqref{y_vars_lin} and the linearized path constraints~\eqref{path-ineqs_lin} in Table~E.2.
Provided the corresponding results, in the subsequent analysis, aimed at evaluating the benefit of the proposed valid inequalities and bounds, we will consider first the MEC and MCC formulations with aggregated connectivity constraints~\eqref{connectivity_lin} and the edge constraints~\eqref{y_vars_lin}, and then the MOP formulation without any (redundant) connectivity constraints and, again, the edge constraints~\eqref{y_vars_lin}.

Tables~\ref{tab:MEC-results} and \ref{tab:MCC-results} report the performances of the MEC and the MCC formulations, respectively, with different combinations of the bound provided in Section~\ref{subsec:Q3}, the valid inequalities presented in Section~\ref{subsec:validineq}, as well as the warm-start procedures discussed in Section~\ref{subsubsec:Q1} for the MEC and \ref{subsubsec:Q2} for the MCC problem.
Each row of the table reports the average results over the whole set of 409 instances for a given configuration. The first column (\textit{Model}) indicates the enabled inequalities, the second and third columns ($Q$ and \textit{Warm-start}) report information about whether Algorithm~\ref{algo:heu3} is used to compute an upper bound $\bar{Q}$. If such an algorithm is not used, the trivial bound $|V|$ is reported, otherwise, the same procedure is also used to provide a warm-start solution to the solver.
The subsequent columns report, for each configuration: 
the number of optimally solved instances (\textit{\#Opt}); the number of instances for which at least a feasible solution has been identified within the imposed time and memory limits (\textit{\#Feas}), where the number in parenthesis indicates the number of instances for which the memory limit was reached; the average lower and upper bound values at termination (\textit{LB} and \textit{UB}); the average percentage gap at termination (\textit{Gap}), computed as $(UB-LB)/UB$; the average runtime in seconds (\textit{Time}); and the average number of explored nodes of the branch-and-cut tree (\textit{\#Nodes}).
We remark that the value reported in the \textit{Time} column also includes the runtime needed for warm-start and preprocessing procedure, if included.\\
\begin{table}[htbp]
    \caption{\textbf{Performances of MEC formulations.}} 
    \label{tab:MEC-results}%
  \centering
    \resizebox*{0.9\textwidth}{!}{
    \begin{tabular}{l|cc|rrrrrrrr}
    \toprule
          \textit{Model} & $Q$ & \textit{Warm-start} & \textbf{\#Opt} & \textbf{\#Feas} & \textbf{LB} & \textbf{UB} & \textbf{Gap} & \textbf{Time} & \textbf{\#Nodes}\\
    \hline
    \textbf{MEC} & $|V|$ & - & 321   & 403 (6)     & 28.89 & 258.72 & 12.5\% & 840.90 & 41306.22 \\ \hline
    \textbf{MEC}+\eqref{valid-N}+\eqref{MEC:valid-E} & $|V|$ & - & 317   & 403 (6)     & 28.64 & 254.71 & 13.6\% & 891.95 & 54592.58\\   \hline 
    \textbf{MEC}+\eqref{simmetries:type-1} & $|V|$ & - & 356   & 407 (2)     & 25.81 & 211.21 & 5.7\% & 498.73 & 253.11\\
    \textbf{MEC}+\eqref{simmetries:type-2} & $|V|$ & - & 397   & 408 (1)     & 28.72 & 189.32 & 2.4\% & 148.19 & 100.21\\ \hline
    \textbf{MEC} & Alg.~\ref{algo:heu1} &  Alg.~\ref{algo:heu1} & 372   & 408 (1)     & 32.39 & 190.19 & 2.6\% & 429.27 & 5977.18\\
    \textbf{MEC}+\eqref{simmetries:type-2} &  Alg.~\ref{algo:heu1} & Alg.~\ref{algo:heu1} & 394   & 409 (0)     & 33.17 & 191.36 & 2.0\% & 164.20 & 163.15\\ 
    \bottomrule
    \end{tabular}%
    }
\end{table}%
\vspace{2mm}

\begin{table}[htbp]
    \caption{\textbf{Performances of MCC formulations.}} 
    \label{tab:MCC-results}%
  \centering
  \resizebox*{0.9\textwidth}{!}{
    \begin{tabular}{l|cc|rrrrrrrr}
    \toprule
          \textit{Model} & $Q$ & \textit{Warm-start} & \textbf{\#Opt} & \textbf{\#Feas} & \textbf{LB} & \textbf{UB} & \textbf{Gap} & \textbf{Time} & \textbf{\#Nodes}\\
    \hline
    \textbf{MCC} & $|V|$ & - & 387   & 406 (3)     & 4.71  & 6.98  & 2.1\% & 229.93 & 4084.24 \\ \hline
    \textbf{MCC}+\eqref{valid-N}+\eqref{MCC:valid-E} & $|V|$ & - & 384   & 406 (3)     & 4.78  & 7.09  & 2.4\% & 227.95 & 1066.66 \\ \hline
    \textbf{MCC}+\eqref{simmetries:type-1} & $|V|$ & - & 385   & 406 (3)     & 4.72  & 6.82  & 2.4\% & 237.91 & 289.33\\
    \textbf{MCC}+\eqref{simmetries:type-2} & $|V|$ & - & 399   & 408 (1)     & 4.90  & 6.13  & 1.2\% & 101.80 & 453.40\\ \hline
    \textbf{MCC} & Alg.~\ref{algo:heu2} & Alg.~\ref{algo:heu2}  & 390   & 408 (1)     & 4.87  & 5.55  & 0.9\% & 174.11 & 1875.37\\ 
    \textbf{MCC}+\eqref{simmetries:type-2} & Alg.~\ref{algo:heu2} & Alg.~\ref{algo:heu2}  & 397   & 409 (0)     & 5.02  & 5.19  & 0.4\% & 113.61 & 1290.71\\
    \bottomrule
    \end{tabular}%
    }
\end{table}%
\vspace{4mm}

In the second row of Tables~\ref{tab:MEC-results} and \ref{tab:MCC-results}, inequalities~\eqref{valid-N} are added to the model together with \eqref{MEC:valid-E} for the MEC problem, and \eqref{MCC:valid-E} for the MCC problem. Despite these additional inequalities, the obtained configurations show an increase in the average gap and a reduction in the number of instances solved to optimality. 
Indeed, including inequalities~\eqref{valid-N} and~\eqref{MEC:valid-E} in the MEC formulation, and~\eqref{valid-N} and~\eqref{MCC:valid-E} in the MCC formulation, significantly alters the cutting plane generation automatically performed by Gurobi, leading to fewer cuts and contributing to deteriorating the performance. While this yields a few additional optimally solved instances (7 for the MEC and 4 for the MCC), it also increases the number of instances exceeding the time limit (11 for the MEC and 7 for the MCC).
While for the MCC problem the average runtime is slightly improved, for the MEC problem the additional inequalities slow down the computation. 
Adding either symmetry-breaking inequalities~\eqref{simmetries:type-1} or~\eqref{simmetries:type-2} improves the performance of the MEC formulation w.r.t.\ the plain model in terms of the number of optimally solved instances, average runtime, and average gap, with inequalities~\eqref{simmetries:type-2} being the best between the two. This is not true for the MCC problem, where only inequalities~\eqref{simmetries:type-2} have a positive impact, whereas inequalities~\eqref{simmetries:type-1} worsens the computational results. Finally, when using Algorithms~\ref{algo:heu1} for the MEC and \ref{algo:heu2} for the MCC problem, all the considered metrics improved. In particular, with the configuration involving inequalities~\eqref{simmetries:type-2}, at least a feasible solution is found for all instances and both the average runtime and the average gap are remarkably reduced.\\

\begin{figure}[ht]
  \caption{{Number of instances optimally by MEC formulations within a given runtime.}}
   \centering
   \hspace*{\fill}%
   \hfill
  {\includegraphics[scale=0.59]{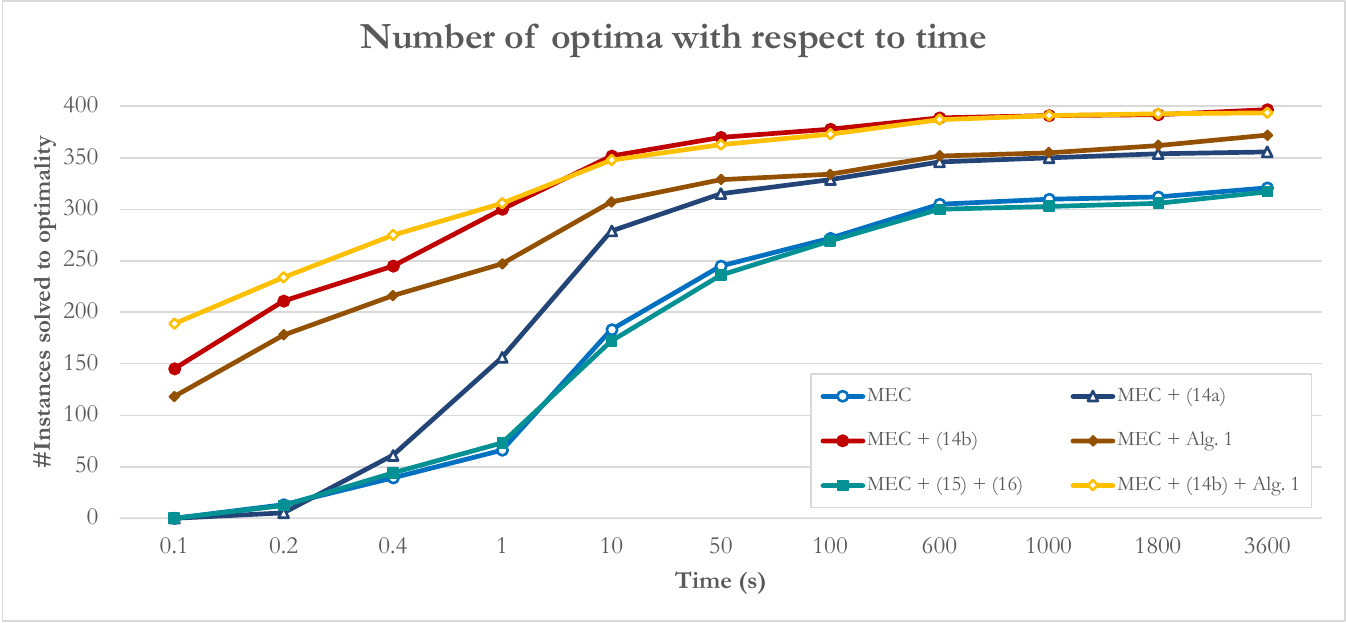}\label{fig:MEC-Performance-Chart}}\hfill
  \hspace*{\fill}%
\end{figure}
\vspace{2mm}
\begin{figure}[ht]
  \caption{{Number of instances optimally by MCC formulations within a given runtime.}}
   \centering
   \hspace*{\fill}%
   \hfill
  {\includegraphics[scale=0.59]{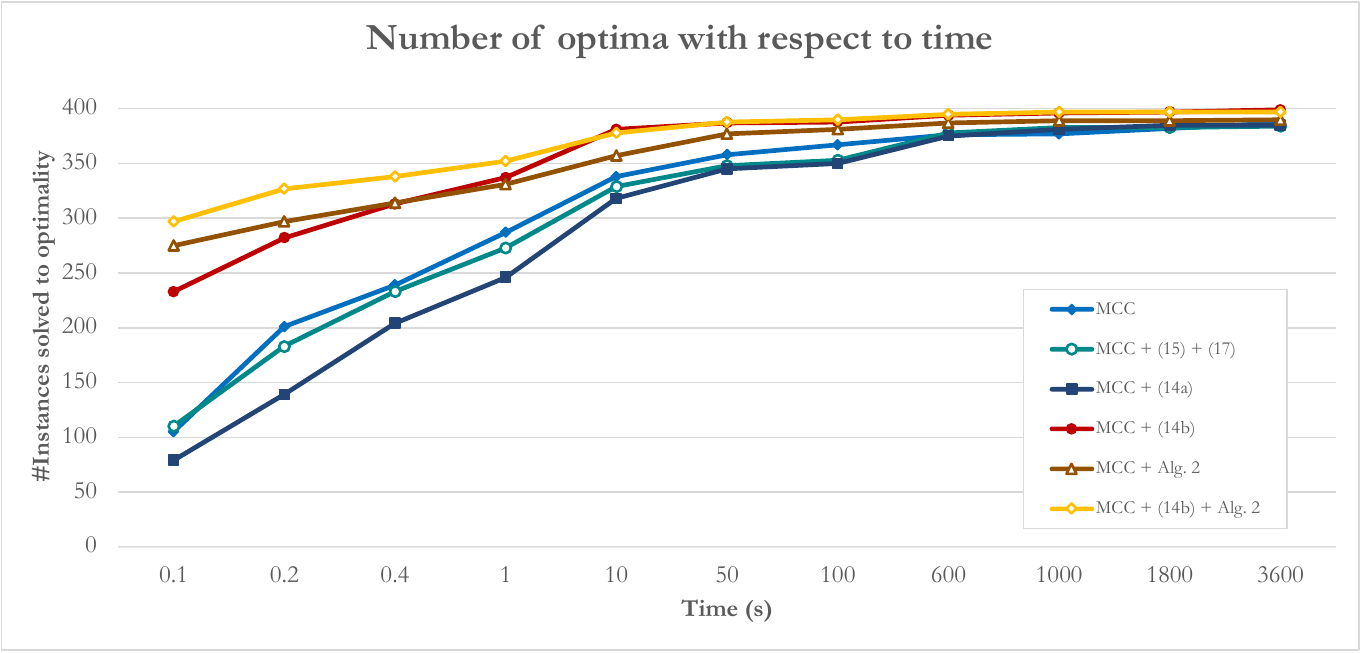}\label{fig:MCC-Performance-Chart}}\hfill
  \hspace*{\fill}
\end{figure}
\vspace{2mm}

We further provide in Figures~\ref{fig:MEC-Performance-Chart} and \ref{fig:MCC-Performance-Chart} two summary charts reporting the number of instances solved to optimality within a given computational time by all the different algorithm configurations we tested for solving the MEC and MCC problems. They illustrate how incorporating the discussed valid inequalities and algorithms enhances performance, with the configurations involving symmetry-breaking inequalities~\eqref{simmetries:type-2} and the algorithms providing a bound $\bar{Q}$ and a warm-start solution consistently outperforming the others by solving more instances in less time. These visualizations further confirm the effectiveness of the procedures and inequalities proposed.\\

\begin{table}[htbp]
    \caption{{Performances of MOP formulations.}} 
    \label{tab:MOP-results}%
  \centering
  \resizebox*{0.9\textwidth}{!}{
    \begin{tabular}
    {l|c|cc|rrrrrrrr}
    \toprule
          \textit{Model} & \textit{Preprocessing} & $Q$ & \textit{Warm-start} & \textbf{\#Opt} & \textbf{\#Feas} & \textbf{LB} & \textbf{UB} & \textbf{Gap} & \textbf{Time} & \textbf{\#Nodes} \\
    \hline
    \textbf{MOP} 	& - & $|V|$ 	& - & 408   	& 408 (1) & 18.72 & 19.32 & 0.24\% & 2.94	 	& 5.14	 	\\ \hline
    \textbf{MOP}+\eqref{valid-N} & - & $|V|$ 	& - & 408 & 408 (1) & 18.72 & 19.32 & 0.24\% & 3.22	 	& 7.24	 	\\ \hline
    \textbf{MOP}+\eqref{simmetries:type-1} 	& - & $|V|$ 	& - & 394   	& 408 (1) & 17.92 & 19.74 & 1.08\% & 158.13	& 722.36	\\
    \textbf{MOP}+\eqref{simmetries:type-2} 	& - & $|V|$ 	& - & 406   	& 408 (1) & 18.72 & 19.33 & 0.25\% & 29.61& 151.26	\\ \hline
    \textbf{MOP} & - & Alg.~\ref{algo:heu3} 	& Alg.~\ref{algo:heu3} 	& 409  & 409 (0) & 19.16 & 19.16 & 0.00\% & 4.81 & 10.55\\ 
    \textbf{MOP} & Alg.~\ref{algo:preprocessing-MOP} & $|V|$ 	& - & 408 & 408 (1) & 18.72 & 19.32 & 0.24\% & 2.03 	& 0.86\\     
    \textbf{MOP} 	& Alg.~\ref{algo:preprocessing-MOP} & Alg.~\ref{algo:heu3}  & Alg.~\ref{algo:heu3} 	& 409 & 409 (0) & 19.16 & 19.16 & 0.00\% & 4.08	& 1.98\\
    \bottomrule
    \end{tabular}%
    }
\end{table}\vspace{3mm}

As regards the MOP formulation, we report the solutions obtained by testing the different configurations in Table~\ref{tab:MOP-results}. The headings of this table are the same as in Tables~\ref{tab:MEC-results} and \ref{tab:MCC-results}, with the additional column related to the preprocessing procedure discussed in Section~\ref{subsec:preprocessing}.
The results show that the plain model, associated with the first row of the table, solves 408 instances to optimality in an average runtime of 2.94 seconds and with an average gap at termination of 0.24\%. 
Contrary to what we observed for the MEC and MCC problems, the use of the valid inequalities presented in Sections~\ref{subsub:symmetry} and~\ref{subsub:edges} does not speed up the solution process. Indeed, enabling inequalities~\eqref{valid-N} produces the same number of optimal solutions, as well as the same average lower and upper bound values, with a slightly larger average runtime due to the larger number of explored nodes of the branch-and-cut tree during the solution process.
Similarly, when adding the symmetry-breaking inequalities~\eqref{simmetries:type-1} and~\eqref{simmetries:type-2}, the number of explored nodes considerably increases, leading to a larger average gap and less instances solved to optimality.
More in detail, including inequalities~\eqref{simmetries:type-1} in the MOP formulation leads to a smaller presolved model but is also associated with a significant decrease in the number of generated cutting planes, which affects the overall performance. In contrast, including equalities~\eqref{simmetries:type-2} does not reduce the presolved model size, and is associated with a considerable slowdown of the root relaxation solution time.
Conversely, the solution process benefits from using Algorithm~\ref{algo:heu3} and/or Algorithm~\ref{algo:preprocessing-MOP}.
When the bound on the number of colorful components together with a warm-start solution (computed by  Algorithm~\ref{algo:heu3}) is used, the MOP formulation manages to optimally solve all the instances, in a average runtime of 4.81 seconds. 
When the preprocessing procedure described in Algorithm~\ref{algo:preprocessing-MOP} is employed, the solution is produced faster than the plain model, but there is still one instance that is not solved to optimality. 
Finally, we test the model with both Algorithm~\ref{algo:heu3} and Algorithm~\ref{algo:preprocessing-MOP}, obtaining the best results in terms of number of optimally solved instances and average gap, associated with a smaller average runtime compared with the other configuration solving the whole set of instances to optimality.

\begin{figure}[ht]
  \caption{{Number of instances optimally by MOP formulations within a given runtime.}}
   \centering
  {\includegraphics[scale=0.5]{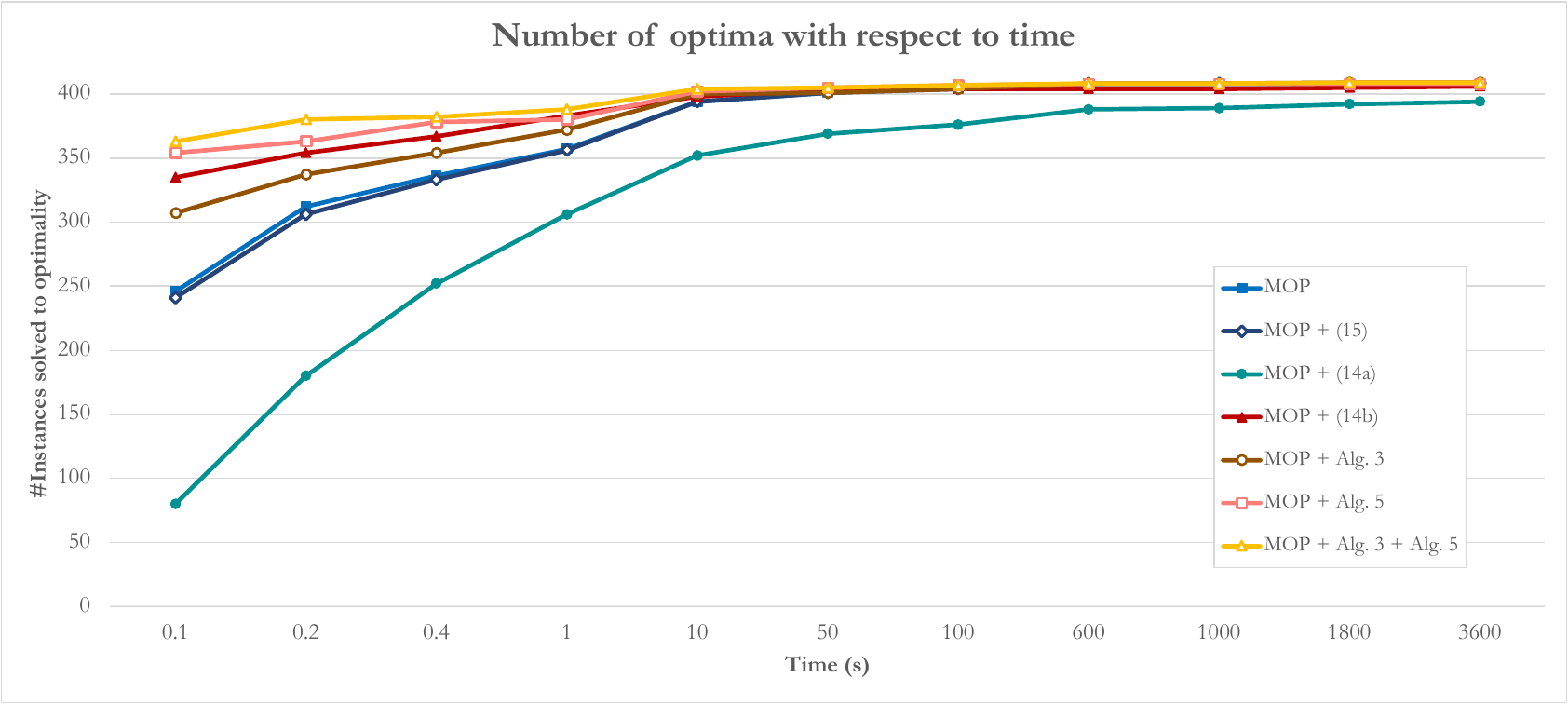}\label{fig:MOP-Performance-Chart}}
\end{figure}

The summary chart reported in Figure~\ref{fig:MOP-Performance-Chart} helps visualize the performances of the configurations w.r.t.\ the computational time, confirming that the best configuration is the one without connectivity constaints and using both the preprocessing and the warm-start procedures, together with the bound $\bar{Q}$.

\section{Conclusion}\label{sec:conclusion}
We propose integer non-linear programming formulations for three problems belonging to the class of partitioning a colored graph into colorful components, namely, the MOP, MEC, and MCC problems. The formulations are then linearized through standard techniques. An exact branch-and-cut algorithm is developed for each problem, building upon the linearized formulations and enhanced through different speed-up techniques, i.e., valid inequalities, bounds on the number of variables, warm-start heuristics, and a preprocessing procedure. All the techniques proved to be effective in improving the performance of the exact algorithms. Tests on benchmark instances show that the algorithm can solve reasonably sized instances.

To the best of our knowledge, this is the first work proposing an exact algorithm for the problems tackled. Given the relevant applications related to the problems, this work can pave the way for future research related to either strengthening the performance of the approach proposed in this work (for example by devising new classes of valid inequalities), or designing scalable heuristic approaches. In the second case, the exact approach proposed in this work can serve as a benchmark to measure the quality of the solutions provided by the heuristic.

\paragraph{\textbf{Acknowledgments:}\\}
The authors would like to thank Diego Delle Donne (ESSEC Business School in Paris) for his valuable suggestions, as well as the anonymous referees for the careful and insightful review of the manuscript.

\bibliographystyle{elsarticle-harv}
\bibliography{colorfulcomponents}
\end{document}